\newtheorem{theorem}{Theorem}
\newtheorem{lemma}[theorem]{Lemma}
\newtheorem{corollary}[theorem]{Corollary}
\newtheorem{proposition}[theorem]{Proposition}
\newtheorem{problem}{Problem}
\title{Limit $T$-subspaces and the central polynomials in $n$ variables of the Grassmann algebra}
\author{
Dimas Jos\'e Gon\c{c}alves\thanks{e-mail \texttt{dimas@mat.unb.br}},
\\
Alexei Krasilnikov\thanks{e-mail \texttt{alexei@unb.br}},
\\
Irina Sviridova\thanks{e-mail \texttt{irina@mat.unb.br}}
\\
\\
Departamento de Matem\'atica,\\
Universidade de Bras\'\i lia,\\
70910-900 Bras\'\i lia, DF, Brazil }
\date{}
\begin{document}

\maketitle

\begin{abstract}
Let $F \langle X \rangle$ be the free unitary associative algebra
over a field $F$ on the set $X = \{ x_1,x_2, \ldots \}$. A vector
subspace $V$ of $F \langle X \rangle$  is called a
\emph{$T$-subspace} (or a \emph{$T$-space}) if $V$ is closed under
all endomorphisms of $F \langle X \rangle$. A $T$-subspace $V$ in
$F \langle X \rangle$ is \emph{limit} if every larger $T$-subspace
$W \gneqq V$ is finitely generated (as a $T$-subspace) but $V$
itself is not. Recently Brand\~ao Jr., Koshlukov, Krasilnikov and
Silva have proved that over an infinite field $F$ of
characteristic $p>2$ the $T$-subspace $C(G)$ of the central
polynomials of the infinite dimensional Grassmann algebra $G$ is a
limit $T$-subspace. They conjectured that this limit $T$-subspace
in $F \langle X \rangle$ is unique, that is, there are no limit
$T$-subspaces in $F \langle X \rangle$ other than $C(G)$. In the
present article we prove that this is not the case. We construct
infinitely many limit $T$-subspaces $R_k$ $(k \ge 1)$ in the
algebra $F \langle X \rangle$ over an infinite field $F$ of
characteristic $p>2$. For each $k \ge 1$, the limit $T$-subspace
$R_k$ arises from the central polynomials in $2k$ variables of the
Grassmann algebra $G$.
\end{abstract}

\noindent \textbf{2000 AMS MSC Classification:} 16R10, 16R40, 16R50

\noindent \textbf{Keywords:} polynomial identities, central polynomials, Grassmann algebra, $T$-subspace

\section{Introduction}

Let $F$ be a field, $X$ a non-empty set and let $F \langle X
\rangle$ be the free unitary associative algebra over $F$ on the
set $X$. Recall that a \textit{T-ideal} of $F \langle X \rangle$
is an ideal closed under all endomorphisms of $F \langle X
\rangle$. Similarly, a \emph{$T$-subspace} (or a \emph{$T$-space})
is a vector subspace in $F \langle X \rangle$ closed under all
endomorphisms of $F \langle X \rangle$.

Let $I$ be a $T$-ideal in $F \langle X \rangle$. A subset $S
\subset I$ \textit{generates $I$ as a $T$-ideal} if $I$ is the
minimal $T$-ideal  in $F \langle X \rangle$ containing $S$. A
$T$-subspace of $F \langle X \rangle$ generated by $S$ (as a
$T$-subspace) is defined in a similar way. It is clear that the
$T$-ideal ($T$-subspace) generated by $S$ is the ideal (vector
subspace) generated by all the polynomials $f(g_1,\ldots, g_m)$,
where $f=f(x_1, \ldots , x_m) \in S$ and $g_i\in F \langle X
\rangle$ for all $i$.

Note that if $I$ is a $T$-ideal in $F \langle X \rangle$ then
$T$-ideals and $T$-subspaces can be defined in the quotient
algebra $F \langle X \rangle/I$ in a natural way. We refer to
\cite{drbook, drformanekbook, gz, K-BR, kemerbook, rowenbook} for
the terminology and basic results concerning $T$-ideals and
algebras with polynomial identities and to \cite{BOR_cp, BKKS,
GrishCyb, grshchi, K-BR} for an account of the results concerning
$T$-subspaces.

From now on we write $X$ for $\{ x_1, x_2, \ldots \}$ and $X_n$
for $\{ x_1, \ldots , x_n \}$, $X_n \subset X$. If $F$ is a field
of characteristic $0$ then every $T$-ideal in $F \langle X
\rangle$ is finitely generated (as a $T$-ideal); this is a
celebrated result of Kemer \cite {Kemer, kemerbook} that solves
the Specht problem. Moreover, over such a field $F$ each
$T$-subspace in $F \langle X \rangle$ is finitely generated; this
has been proved more recently by Shchigolev \cite{Shchigolev01}.
Very recently Belov \cite{Belov10} has proved that, for each
Noetherian commutative and associative unitary ring $K$ and each
$n \in \mathbb N$, each $T$-ideal in $K \langle X_n \rangle$ is
finitely generated.

On the other hand, over a field $F$ of characteristic $p>0$ there
are $T$-ideals in $F \langle X \rangle$ that are not finitely
generated. This has been proved by Belov \cite{Bel99}, Grishin
\cite{Grishin99} and Shchigolev \cite{Shchigolev99} (see also
\cite{Bel00, Grishin00, K-BR}). The construction of such
$T$-ideals uses the non-finitely generated $T$-subspaces in $F
\langle X \rangle$ constructed by Grishin \cite{Grishin99} for
$p=2$ and by Shchigolev \cite{Shchigolev00} for $p>2$ (see also
\cite{Grishin00}). Shchigolev \cite{Shchigolev00} also constructed
non-finitely generated $T$-subspaces in $F \langle X_n \rangle$,
where $n>1$ and $F$ is a field of characteristic $p>2$.

A $T$-subspace $V^*$ in $F \langle X \rangle$ is called
\emph{limit} if every larger $T$-subspace $W \gneqq V^*$ is
finitely generated as a $T$-subspace but $V^*$ itself is not. A
\textit{limit $T$-ideal} is defined in a similar way. It follows
easily from Zorn's lemma that if a $T$-subspace $V$ is not
finitely generated then it is contained in some limit $T$-subspace
$V^*$. Similarly, each non-finitely generated $T$-ideal is
contained in a limit $T$-ideal. In this sense limit $T$-subspaces
($T$-ideals) form a ``border'' between those $T$-subspaces
($T$-ideals) which are finitely generated and those which are not.

By \cite{Bel99, Grishin99, Shchigolev99}, over a field $F$ of
characteristic $p>0$ the algebra $F \langle X \rangle$ contains
non-finitely generated $T$-ideals; therefore, it contains at least
one limit $T$-ideal. No example of a limit $T$-ideal is known so
far. Even the cardinality of the set of limit $T$-ideals in $F
\langle X \rangle$ is unknown; it is possible that, for a given
field $F$ of characteristic $p>0$, there is only one limit
$T$-ideal. The non-finitely generated $T$-ideals constructed in
\cite{AladovaKras} come closer to being limit than any other known
non-finitely generated $T$-ideal. However, it is unlikely that
these $T$-ideals are limit.

About limit $T$-subspaces in $F \langle X \rangle$ we know more
than about limit $T$-ideals. Recently Brand\~ao Jr., Koshlukov,
Krasilnikov and Silva \cite{BKKS} have found the first example of
a limit $T$-subspace in $F \langle X \rangle$ over an infinite
field $F$ of characteristic $p>2$. To state their result precisely
we need some definitions.

For an associative algebra $A$, let $Z(A)$ denote the centre of
$A$,
\[
Z(A) = \{ z \in A \mid za= az \mbox{ for all } a \in A \}.
\]
A polynomial $f(x_1,\ldots,x_n)$ is \emph{a central polynomial}
for $A$ if $f(a_1,\ldots, a_n) \in Z(A)$ for all $a_1, \dots , a_n
\in A$. For a given algebra $A$, its central polynomials form a
$T$-subspace $C(A)$ in $F \langle X \rangle$. However, not every
$T$-subspace can be obtained as the $T$-subspace of the central
polynomials of some algebra.

Let $V$ be the vector space over a field $F$ of characteristic
$\ne 2$, with a countable infinite basis $e_1$, $e_2, \dots $ and
let $V_s$ denote the subspace of $V$ spanned by $e_1, \ldots , e_s$
$(s = 2,3 , \ldots ) .$ Let $G$ and $G_s$ denote the unitary
Grassmann algebras of $V$ and $V_s$, respectively. Then as a
vector space $G$ has a basis that consists of 1 and of all
monomials $e_{i_1}e_{i_2}\ldots e_{i_k}$, $i_1<i_2<\cdots<i_k$,
$k\ge 1$. The multiplication in $G$ is induced by $e_ie_j=-e_je_i$
for all $i$ and $j$. The algebra $G_s$ is the subalgebra of $G$
generated by $e_1, \ldots ,e_s$, and $ \mbox{dim }G_s = 2^s$. We
refer to $G$ and $G_s$ $(s = 2,3, \ldots )$ as to the infinite
dimensional Grassmann algebra and the finite dimensional Grassmann
algebras, respectively.

The result of \cite{BKKS} concerning a limit $T$-subspace is as
follows:

\begin{theorem}[\cite{BKKS}]
\label{C_limit} Let $F$ be an infinite field of characteristic
$p>2$ and let $G$ be the infinite dimensional Grassmann algebra
over $F$. Then the vector space $C(G)$ of the central polynomials
of the algebra $G$ is a limit T-space in $F \langle X \rangle$.
\end{theorem}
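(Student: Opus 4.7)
The statement has two distinct parts: first, that $C(G)$ is not finitely generated as a $T$-subspace of $F\langle X\rangle$, and second, the harder ``maximality'' claim that any $T$-subspace $W$ with $W\gneqq C(G)$ is finitely generated. The plan is to handle these separately, working throughout modulo $T(G)$, since $T(G)\subseteq C(G)$ and so $T$-subspaces containing $C(G)$ correspond to $T$-subspaces in the relatively free algebra $F\langle X\rangle/T(G)$.

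For the first part, I would appeal to (or reprove in the needed form) Shchigolev's construction: exhibit an explicit sequence of central polynomials $w_k\in C(G)$, involving $k$ (or $2k$) distinct variables — the natural candidates being built from commutators $[x_i,x_j]$ combined with $p$-th powers, for instance in the style of $w_k=x_1^{p-1}\cdots x_{2k}^{p-1}[x_1,x_2]\cdots[x_{2k-1},x_{2k}]$ — and then show, by a direct multilinearization and specialization argument on the Grassmann algebra, that no $w_k$ lies in the $T$-subspace generated by $\{w_1,\dots,w_{k-1}\}$ modulo $T(G)$. This is a weight/degree counting argument in the relatively free algebra of $G$, exploiting the explicit monomial basis of $F\langle X\rangle/T(G)$ given by products $x_{i_1}^{a_1}\cdots x_{i_r}^{a_r}[x_{j_1},x_{j_2}]\cdots[x_{j_{2s-1}},x_{j_{2s}}]$.

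For the maximality part, let $W\supsetneq C(G)$ and pick $f\in W\setminus C(G)$. Using the normal form in $F\langle X\rangle/T(G)$, together with the fact that $[x,y]$ is central on $G$, I would first reduce $f$ modulo $C(G)$ to a ``canonical'' non-central representative — for example a polynomial whose leading term is of the form $x_1^{a_1}\cdots x_n^{a_n}$ with appropriate constraints on the $a_i$'s. The key move is then to show that $f$, combined with $C(G)$ and the $T$-space operations (substitutions, linear combinations), produces \emph{every} polynomial that lies outside $C(G)$ in some well-chosen ``upper layer'' of $F\langle X\rangle/T(G)$. More precisely, I would try to show that the $T$-space generated by any one such $f$ together with $C(G)$ already contains all Shchigolev-type generators $w_k$ for $k$ beyond a bound determined by $\deg f$, so that only finitely many $w_k$ and finitely many additional polynomials from $W$ are needed to generate $W$. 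The natural mechanism is that a non-central $f$ can be specialized to produce elements that, when multiplied by the central commutator products already in $C(G)$, fill in all the higher-complexity generators.

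The main obstacle I expect is precisely this reduction-to-canonical-form step: proving that every non-central $f$, no matter how complicated, $T$-space generates (mod $C(G)$) a single ``universal'' class of non-central polynomials is delicate. One needs a robust normal form for $F\langle X\rangle/C(G)$ and a careful bookkeeping of which substitutions preserve the non-central leading term while killing unwanted summands. A subsidiary difficulty is to ensure the argument is uniform in the number of variables — any bound allowed to depend on $f$ (which is fine) but not to blow up when passing from $f$ to its specializations. If that technical core can be established, the finite generation of $W$ follows by combining a finite subset of $C(G)$ (enough to realize the $w_k$ for small $k$) with a single $f\in W\setminus C(G)$ and finitely many correction terms.
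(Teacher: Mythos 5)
This theorem is imported from \cite{BKKS}; the present paper does not reprove it, but its proofs of the parallel statements for $C_{2k}$ and $R_k$ (Propositions \ref{C2k_0}, \ref{C_2k_limit} and \ref{Rk_gen}) follow exactly the two-part strategy you describe, so your outline is aimed in the right direction. Nevertheless, both halves have genuine gaps. In the non-finite-generation half, the chain you build from the polynomials $w_k\equiv q_k\pmod{T^{(3)}}$ alone is not cofinal in $C(G)$: every substitution instance of every $w_k$ contains a commutator factor and hence dies under abelianization, as does all of $T(G)=T^{(3)}$, so the union $\bigcup_l\langle T(G),w_1,\dots,w_l\rangle^{TS}$ does not even contain the central polynomial $x_1^p$, let alone the generators $x_1^p q_j(x_2,\dots,x_{2j+1})$ of Theorem \ref{generators_of_C} (a $T$-subspace is not closed under products, so $x_1^pq_j$ is not recovered from $x_1^p$ and $q_j$ taken separately). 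Strict ascent of a non-cofinal chain does not preclude $C(G)$ being finitely generated; you must run the chain through a complete generating set, $V_l=\langle x_1[x_2,x_3,x_4],\,x_1^p,\,x_1^pq_1,\dots,x_1^pq_l\rangle^{TS}$, and prove $x_1^pq_{l+1}\notin V_l$. That non-membership is the entire technical content of this half --- the paper obtains the analogous fact from Grishin--Tsybulya \cite[Theorem 3.1]{GrishCyb} via Proposition \ref{G-Ts} --- and ``a weight/degree counting argument in the relatively free algebra'' is a placeholder, not a proof.

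In the maximality half you have correctly guessed the mechanism: a single $f\in W\setminus C(G)$ forces either $x_1$ or some $x_1[x_2,x_3]\cdots[x_{2\theta},x_{2\theta+1}]$ into $W$, and the latter swallows all generators $x_1^pq_j$ with $j\ge\theta$, leaving only finitely many to adjoin. But the step you yourself flag as ``the main obstacle'' is precisely where all the work lies, and you give no argument for it. The missing ingredients, visible in the paper's treatment of $C_{2k}$, are: (i) reduce to multihomogeneous $f$ with $\deg_{x_i}f=p^{s_i}$, which is legitimate for $T$-subspaces over an infinite field of characteristic $p$; (ii) observe that if every $s_i\ge 1$ then $f$ is automatically central, by (\ref{prop2}) and Lemma \ref{lemma_GTs}, so a genuinely non-central $f$ must have a variable of degree exactly one; (iii) exploit that degree-one variable as in Lemma \ref{WC(G)} --- substitute $1$ for the unused variables, then extract the multilinear component of $f_1(x_1,x_2+1,\dots)$ --- to land exactly on $x_1[x_2,x_3]\cdots[x_{2\theta},x_{2\theta+1}]$ (or on $x_1$, whence $W=F\langle X\rangle$), and verify the reverse inclusion $\langle f\rangle^{TS}\subseteq\langle x_1[x_2,x_3]\cdots[x_{2\theta},x_{2\theta+1}]\rangle^{TS}+\langle[x_1,x_2]\rangle^{TS}+T^{(3)}$. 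Without (ii) and (iii) the classification of $\langle f\rangle^{TS}+C(G)$, and hence the finite generation of every $W\supsetneq C(G)$, is not established.
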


It was conjectured in \cite{BKKS} that a limit $T$-subspace in $F
\langle X \rangle$ is unique, that is, $C(G)$ is the only limit
$T$-subspace in $F \langle X \rangle$. In the present article we
show that this is not the case. Our first main result is as
follows.

\begin{theorem}
\label{theorem_main1} Over an infinite field $F$ of characteristic
$p>2$ the algebra $F \langle X \rangle$ contains infinitely many
limit $T$-subspaces.
\end{theorem}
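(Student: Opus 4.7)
The plan is to construct, for each $k \geq 1$, an explicit $T$-subspace $R_k \subset F \langle X \rangle$ arising from the central polynomials of $G$ in the variables $x_1, \ldots, x_{2k}$, and to establish three assertions: (a) $R_k$ is not finitely generated as a $T$-subspace; (b) every $T$-subspace $W$ with $W \gneqq R_k$ is finitely generated; (c) the $T$-subspaces $R_k$, $k \ge 1$, are pairwise distinct. Properties (a) and (b) say that each $R_k$ is limit, and (c) then yields infinitely many distinct limit $T$-subspaces in $F \langle X \rangle$.

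A natural starting point is the product of commutators $w_k = [x_1,x_2][x_3,x_4] \cdots [x_{2k-1},x_{2k}]$, which lies in $C(G)$ since every commutator in $G$ is central. Inspired by Theorem~\ref{C_limit}, the expected definition of $R_k$ is a carefully chosen $T$-subspace built from an infinite family of polynomials of the form $w_k$ multiplied by suitable $p$-power expressions (analogues of the $[x_1,x_2]^{p^n}$ appearing in the proof that $C(G)$ is limit), together possibly with some further central polynomials in $2k$ variables. The delicate point in the definition is to arrange the $R_k$ to be pairwise incomparable; otherwise no member of an ascending chain of non-finitely generated $T$-subspaces could itself be limit.

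To verify (a), I would exhibit an explicit infinite family $\{f_{k,n}\}_{n \ge 0} \subset R_k$ and, working in suitable multihomogeneous components modulo the $T$-ideal $T[G]$ of identities of $G$, show that no finite subfamily generates the remaining polynomials as a $T$-subspace, paralleling the argument used in \cite{BKKS} for $C(G)$. For (b), which is the heart of the paper, the strategy is: given any $f \notin R_k$, reduce it modulo $T[G]$ and the defining relations of $R_k$ to a canonical form (most likely a linear combination of products of commutators in a standard ordering), and then show that adjoining $f$ to $R_k$ forces the infinite tower of generators of $R_k$ into finitely many relations inside the $T$-subspace generated by $R_k$ and $f$. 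Finally, (c) should follow by exhibiting, for each pair $k \neq k'$, a concrete polynomial in $R_k \setminus R_{k'}$ or vice versa, built from $w_k$ and $w_{k'}$ and distinguished by its multihomogeneous support.

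The principal obstacle is assertion (b). Proving the limit property demands a sharp description of $R_k$ modulo $T[G]$ and a careful multilinear and multihomogeneous analysis of how substitutions of an external polynomial $f$ into the endomorphisms of $F \langle X \rangle$ interact with the generators of $R_k$. This is where the finer combinatorics of products of commutators in $G$—essentially the structure of the Grassmann algebra in characteristic $p>2$ and the known description of $C(G)$—must be exploited, and where the precise choice of the defining generators for $R_k$ in terms of $2k$ variables becomes crucial for ensuring both non-finite generation from below and finite generation of every proper enlargement from above.
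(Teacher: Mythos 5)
Your outline reproduces the paper's three-step skeleton (each $R_k$ is not finitely generated; every strictly larger $T$-subspace is finitely generated; the $R_k$ are pairwise distinct), but there is a genuine gap at the very first step, the definition of $R_k$, and it is fatal to the plan as described. Every polynomial you propose to put into $R_k$ --- the products of commutators $w_k$, their multiples by $p$-power expressions such as $[x_1,x_2]^{p^n}$, and central polynomials of $G$ in $2k$ variables --- is a central polynomial of $G$ (products of commutators and $p$-th powers are central in $G$, and products of central elements are central), so your $R_k$ would satisfy $R_k \subseteq C(G)$. But no proper $T$-subspace of $C(G)$ can be limit: $C(G)$ itself is then a strictly larger $T$-subspace that is not finitely generated, by Theorem \ref{C_limit}. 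Hence your construction can yield at most one limit $T$-subspace, namely $C(G)$ itself, and cannot prove the theorem. The missing idea is that $R_k$ must contain elements that are \emph{not} central for $G$: the paper defines $R_k$ as the $T$-subspace generated by $C_{2k}$ together with the entire $T$-ideal $T^{(3,k+1)}$ (generated by $[x_1,x_2,x_3]$ and the product of $k+1$ commutators). Adjoining $T^{(3,k+1)}$ does two things at once: it places, e.g., $x_1[x_2,x_3]\cdots[x_{2k+2},x_{2k+3}]$ into $R_k \setminus C(G)$, so that $R_k$ is no longer trapped below the known limit $T$-subspace; and it absorbs everything with more than $k$ commutator factors, which is precisely what makes every strictly larger $T$-subspace finitely generated (Proposition \ref{Rk_gen}).

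Two further remarks. First, your plan for pairwise distinctness --- exhibiting an explicit polynomial in $R_k \setminus R_{k'}$ --- is not how the paper argues and is harder than necessary; the paper instead shows that $R_k = R_l$ with $k < l$ would force $C(G) \subsetneqq R_l$, whence $R_l$ would be finitely generated by Theorem \ref{C_limit}, contradicting its non-finite generation. (Your observation that distinct limit $T$-subspaces must be incomparable is correct, but it is an automatic consequence of being limit, not a constraint one must engineer into the definition.) Second, even granting the corrected definition, the substantive content of steps (a), (b), (c) --- the Grishin--Tsybulya input giving a strictly ascending chain inside $R_k$, and the classification of all $T$-subspaces properly containing $R_k$ --- is only described, not carried out, so the proposal is a research plan rather than a proof.
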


Let $F$ be an infinite field of characteristic $p >0.$ In order to
prove Theorem \ref{theorem_main1} and to find infinitely many
limit $T$-subspaces in $F \langle X \rangle$ we first find limit
$T$-subspaces in $F \langle X_n \rangle$ for $n = 2 k$, $k \ge 1$.
Let $C_n = C(G) \cap F \langle X_n \rangle$ be the set of the
central polynomials in at most $n$ variables of the unitary Grassmann
algebra $G$. Our second main result is as follows.

\begin{theorem}
\label{theorem_main2} Let $F$ be an infinite field of
characteristic $p>2.$ If $n = 2k$, $k \ge 1$, then $C_{n}$ is a
limit $T$-subspace in $F \langle X_{n} \rangle$. If $n = 2k+1$, $k
>1$, then $C_n$ is finitely generated as a $T$-subspace in $F
\langle X_n \rangle$.
\end{theorem}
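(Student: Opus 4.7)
The plan is to work modulo the $T$-ideal $T(G)$ of identities of $G$, whose relative free algebra $F\langle X_n\rangle/T(G;X_n)$ has a well-understood structure in characteristic $\ne 2$ since $T(G)$ is generated by the triple commutator $[[x_1,x_2],x_3]$. Because $Z(G)=G_0$ (the even component), a polynomial $f\in F\langle X_n\rangle$ lies in $C_n$ iff its image in the relative free algebra is central there; using the multihomogeneous decomposition (valid since $F$ is infinite) this is controlled by the parities of the multi-degrees of the monomials appearing in $f$ together with a short list of basic central polynomials (commutators $[x_i,x_j]$ and their products).

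For the even case $n=2k$ the proof splits naturally in two. To see that $C_{2k}$ is not finitely generated I would exhibit an explicit Shchigolev-type sequence such as
\[
f_m \;=\; x_1^{p^m}x_2^{p^m}\cdots x_{2k}^{p^m}\,[x_1,x_2][x_3,x_4]\cdots[x_{2k-1},x_{2k}],
\]
each clearly central (an even-degree monomial times a product of $k$ commutators), and verify, following the techniques of \cite{Shchigolev00} and \cite{BKKS}, that no finite subfamily generates all $f_m$ as a $T$-subspace. The harder half is to show that every $T$-subspace $W\gneqq C_{2k}$ is finitely generated: given $h\in W\setminus C_{2k}$ one first isolates a multihomogeneous component of $h$ that is non-central modulo $T(G)$, then shows that, together with $C_{2k}$, this component already generates as a $T$-subspace a subspace of finite codimension inside the union of the relevant ``non-central'' multigraded pieces of $F\langle X_{2k}\rangle/T(G)$. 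A pigeonhole argument over the finitely many multi-parity patterns available in $2k$ variables then sandwiches $W$ between $C_{2k}$ and a finitely generated $T$-subspace.

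For the odd case $n=2k+1$ with $k>1$ the aim is to write down an explicit finite generating set. The key observation is that with one extra variable beyond the $2k$ needed to form $k$ commutator pairs, any central polynomial of large $p$-adic degree can be recovered by substituting a power of $x_{2k+1}$ into a fixed central polynomial of bounded complexity; this collapses the non-finite Shchigolev-type sequence that obstructs finite generation in even arity. The hypothesis $k>1$ (so $n\ge 5$) enters precisely to guarantee enough remaining variables to realise every required commutator-pair pattern through such substitutions. The main obstacle in the whole argument will be the finite-generation half of the even case: in contrast with the infinite-variable analysis of \cite{BKKS}, with only $2k$ variables one must rule out new non-finitely generated $T$-subspaces that the infinite-variable argument never encounters, and this requires a genuinely finer analysis of the relative free algebra in $2k$ generators.
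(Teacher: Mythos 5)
Your witness family for the non--finite-generation of $C_{2k}$ does not work, and this is the crux of the even case. Modulo $T^{(3)}=T(G)$ one has $[g^a,h]\equiv a\,g^{a-1}[g,h]$, hence
\[
[x_1^{p^m+1},x_2^{p^m+1}]\cdots[x_{2k-1}^{p^m+1},x_{2k}^{p^m+1}]\equiv (p^m+1)^{2k}\,x_1^{p^m}\cdots x_{2k}^{p^m}[x_1,x_2]\cdots[x_{2k-1},x_{2k}] \pmod{T^{(3)}},
\]
and $(p^m+1)^{2k}\equiv 1\pmod p$. So every $f_m$ lies in the single finitely generated $T$-subspace $\langle [x_1,x_2]\cdots[x_{2k-1},x_{2k}]\rangle^{TS}+T^{(3)}_{2k}$ of $F\langle X_{2k}\rangle$: the family detects nothing. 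What matters is the $p$-adic valuation of the \emph{total} degree in each variable; the correct family is $q_k^{(l)}=x_1^{p^l-1}[x_1,x_2]x_2^{p^l-1}\cdots x_{2k-1}^{p^l-1}[x_{2k-1},x_{2k}]x_{2k}^{p^l-1}$, of degree exactly $p^l$ in each variable, and the statement that $q_k^{(l+1)}$ does not lie in the $T$-subspace generated by $q_k^{(l)}$, the lower central generators and $T^{(3,k+1)}$ is a genuinely hard theorem of Grishin and Tsybulya, not something one ``verifies following the techniques of Shchigolev'' in passing. Relatedly, your opening claim that membership in $C_n$ is controlled by ``parities of multi-degrees'' together with products of commutators misses the characteristic-$p$ phenomenon that drives the whole paper: $g^p$ is central modulo $T^{(3)}$, and $C(G)$ is generated by $x_1[x_2,x_3,x_4]$, $x_1^p$ and the polynomials $x_1^p q_i$.

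The other two halves are sketched too loosely to assess as proofs, but for the record: the ``limit'' half does not proceed by codimension or parity pigeonholing. One reduces to multihomogeneous $f\in W\setminus C_{2k}$ whose degree in each variable is a $p$-power; non-centrality forces some degree to equal $p^0=1$, and an explicit computation then shows that $\langle f\rangle^{TS}+\langle [x_1,x_2]\rangle^{TS}+T^{(3)}$ is either all of $F\langle X\rangle$ or $\langle x_1[x_2,x_3]\cdots[x_{2\theta},x_{2\theta+1}]\rangle^{TS}+\langle[x_1,x_2]\rangle^{TS}+T^{(3)}$ for one of the finitely many $\theta<k$, each giving a finitely generated $W$. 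Your odd-case intuition (one spare variable accommodates $x_1^p q_k(x_2,\ldots,x_{2k+1})$, which absorbs the would-be infinite family) is essentially the right reason, though the clean mechanism is that the known generating set of $C(G)$ truncates in $n$ variables: the generators $x_1^pq_i$ with $2i>n$ fall into $T^{(3,i)}\cap F\langle X_n\rangle=T^{(3)}_n$, whereas in $2k$ variables the top generator must shed its prefix $x_1^p$ and degenerates into the infinite family $q_k^{(l)}$.
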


\noindent \textbf{Remark. } We do not know whether the
$T$-subspace $C_3$ is finitely generated.

\bigskip

Define $[a,b]=ab-ba$, $[a,b,c] = [[a,b],c]$. For $k \ge 1$, let
$T^{(3,k)}$ denote the $T$-ideal in $F \langle X \rangle$
generated by $[x_1,x_2,x_3]$ and $[x_1,x_2][x_3,x_4] \ldots
[x_{2k-1},x_{2k}]$ and let $R_k$ denote the $T$-subspace in $F \langle X
\rangle$ generated by $C_{2k}$ and $T^{(3,k+1)}$. Theorem
\ref{theorem_main1} follows immediately from our third main result
that is as follows.

\begin{theorem}
\label{theorem_main3} Let $F$ be an infinite field of
characteristic $p>2.$ For each $k \ge 1$, $R_k$ is a limit
$T$-subspace in $F \langle X \rangle$. If $k \ne l$ then $R_k \ne
R_l$.
\end{theorem}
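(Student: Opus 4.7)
The plan is to reduce everything to the $2k$-variable setting handled by Theorem~\ref{theorem_main2}, using the retraction $\pi\colon F\langle X\rangle\to F\langle X\rangle$ with $\pi(x_i)=x_i$ for $i\le 2k$ and $\pi(x_i)=0$ for $i>2k$. As an endomorphism of $F\langle X\rangle$ whose image is $F\langle X_{2k}\rangle$, this satisfies $\pi(V)=V\cap F\langle X_{2k}\rangle$ for every $T$-subspace $V$. Writing $R_k=C_{2k}^{T}+T^{(3,k+1)}$, where $C_{2k}^{T}$ denotes the $T$-subspace of $F\langle X\rangle$ generated by $C_{2k}$, the first step is the computation $R_k\cap F\langle X_{2k}\rangle=C_{2k}$. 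We have $\pi(C_{2k}^{T})=C_{2k}$ (evaluations of central polynomials of $G$ at $F\langle X_{2k}\rangle$-elements remain central) and $\pi(T^{(3,k+1)})=T^{(3)}\cap F\langle X_{2k}\rangle\subseteq C_{2k}$: the equality uses that any evaluation of $[x_1,x_2]\cdots[x_{2k+1},x_{2k+2}]$ at elements of $F\langle X_{2k}\rangle$ is a product of $k+1$ commutators in the relatively free Grassmann algebra $E_{2k}=F\langle X_{2k}\rangle/T^{(3)}$, which vanishes because a nonzero product of $k+1$ disjoint commutators would need $2k+2$ distinct generators, and the inclusion uses $T^{(3)}\subseteq T(G)\subseteq C(G)$. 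Consequently, if $R_k$ were $T$-generated by a finite set $\{f_1,\dots,f_r\}\subseteq F\langle X\rangle$, then $C_{2k}=\pi(R_k)$ would be finitely $T$-generated in $F\langle X_{2k}\rangle$ by the same $f_i$'s regarded as polynomial identities, contradicting Theorem~\ref{theorem_main2}; so $R_k$ is not finitely generated.

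The main technical obstacle is the following descent claim: \emph{every $T$-subspace $V\supseteq T^{(3,k+1)}$ is $T$-generated by $(V\cap F\langle X_{2k}\rangle)\cup T^{(3,k+1)}$.} Granting this, $W\supsetneq R_k$ forces $W\cap F\langle X_{2k}\rangle\supsetneq C_{2k}$ (otherwise the claim applied to both $W$ and $R_k$ would yield $W=R_k$); Theorem~\ref{theorem_main2} furnishes a finite $T$-generating set for $W\cap F\langle X_{2k}\rangle$ in $F\langle X_{2k}\rangle$, and adjoining the two $T$-subspace generators $y_1[x_1,x_2,x_3]y_2$ and $y_1[x_1,x_2]\cdots[x_{2k+1},x_{2k+2}]y_2$ of $T^{(3,k+1)}$ produces a finite $T$-generating set for $W$. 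To prove the claim I would pass to $E^{(k)}=F\langle X\rangle/T^{(3,k+1)}$, whose elements have a canonical normal form as $F$-linear combinations of monomials $x_{i_1}\cdots x_{i_s}\cdot[x_{j_1},x_{j_2}]\cdots[x_{j_{2r-1}},x_{j_{2r}}]$ with $r\le k$ and pairwise disjoint commutator indices; a monomial in $m>2k$ variables has $s\ge m-2k\ge 1$, and is the image, under an endomorphism substituting a product of fresh variables (or a fresh commutator) into a suitable slot, of a normal-form monomial on $2k$ letters. Multi-homogeneity of $T$-subspaces over the infinite field $F$ reduces matters to multilinear components, and the delicate step --- where the bulk of the work lies --- is to show that the source monomials on $2k$ letters can be chosen inside $V\cap F\langle X_{2k}\rangle$; this should follow from an $S_m$-equivariant analysis of the multilinear part of $E^{(k)}$ together with explicit ``unpacking'' retractions of the substituting endomorphisms.

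For $R_k\ne R_l$ with $l<k$, consider $h=x_{2l+3}\cdot[x_1,x_2][x_3,x_4]\cdots[x_{2l+1},x_{2l+2}]\in T^{(3,l+1)}\subseteq R_l$. If $h=c+t$ with $c\in C_{2k}^{T}\subseteq C(G)$ and $t\in T^{(3,k+1)}$, evaluate both sides at $x_i=e_i$ for $i=1,\dots,2l+3$. The image of $t$ in the relatively free Grassmann algebra $E=F\langle X\rangle/T^{(3)}$ lies in the $(k+1)$-st power of the commutator ideal, and any product of $k+1$ commutators in the $2l+3\le 2k+1<2k+2$ generators $e_1,\dots,e_{2l+3}$ must repeat some $e_i$; since $[e_i,e_j][e_i,e_\ell]=4e_ie_je_ie_\ell=0$ in $G$, we deduce $t(e_1,\dots,e_{2l+3})=0$. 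But $h(e_1,\dots,e_{2l+3})=2^{l+1}e_{2l+3}e_1e_2\cdots e_{2l+2}$ is a nonzero element of $G$ of odd total degree, while $c(e_1,\dots,e_{2l+3})\in Z(G)=G_+$ is even --- a contradiction. Hence $R_k\ne R_l$, completing the proof.
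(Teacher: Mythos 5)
Your reduction strategy breaks down at the ``descent claim'' in your second paragraph, and the claim is not merely unproven --- it is false. Consider $W = \langle x_1^{p}q_k(x_2,\ldots,x_{2k+1}) \rangle^{TS} + R_k$. First, $W \supsetneqq R_k$: every generator of $C(G)$ from Theorem~\ref{generators_of_C} except $x_1^p q_k(x_2,\ldots,x_{2k+1})$ already lies in $R_k$ (those with $i\le k-1$ sit inside $C_{2k}$, those with $i\ge k+1$ inside $T^{(3,k+1)}$), so if $x_1^pq_k(x_2,\ldots,x_{2k+1})$ belonged to $R_k$ we would get $C(G)\subsetneqq R_k$, whence $R_k$ would be finitely generated by Theorem~\ref{C_limit}, contradicting the fact that it is not (Proposition~\ref{rk}). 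Second, $W \cap F\langle X_{2k}\rangle = C_{2k}$: every evaluation of $x_1^pq_k(x_2,\ldots,x_{2k+1})$ at elements of $F\langle X_{2k}\rangle$ is an evaluation of a generator of $C(G)$ and hence lies in $C(G)\cap F\langle X_{2k}\rangle = C_{2k}$, exactly as in your own computation of $\pi(R_k)$. So $W$ is a $T$-subspace containing $T^{(3,k+1)}$ that is \emph{not} $T$-generated by $(W\cap F\langle X_{2k}\rangle)\cup T^{(3,k+1)}$ (that would give back $R_k$), and it simultaneously refutes your deduction that $W\supsetneqq R_k$ forces $W\cap F\langle X_{2k}\rangle \supsetneqq C_{2k}$. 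This is precisely the phenomenon the paper must confront: Lemma~\ref{fTSRk} has a fourth alternative $L=\langle x_1^{p^s}q_k^{(s)}(x_2,\ldots,x_{2k+1})\rangle^{TS}+R_k$ living genuinely in $2k+1$ variables, and Proposition~\ref{Rk_gen} supplies a separate explicit finite generating set (case 3) for such $W$; no retraction to $F\langle X_{2k}\rangle$ can see them. Since proving that every $W\supsetneqq R_k$ is finitely generated is the heart of the theorem, the proposal as it stands does not prove the statement.

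Two further remarks. In your first paragraph, the step ``$R_k$ finitely $T$-generated in $F\langle X\rangle$ implies $C_{2k}=\pi(R_k)$ finitely $T$-generated in $F\langle X_{2k}\rangle$'' is not immediate under the paper's definition (generators must lie in $F\langle X_{2k}\rangle$, and the span of $F\langle X_{2k}\rangle$-evaluations of a polynomial in more variables need not obviously be of that form); it can be repaired by first replacing the hypothetical generators by finitely many elements of $C_{2k}$ together with the two generators of $T^{(3,k+1)}$, but the paper's ascending-chain argument via Proposition~\ref{G-Ts} avoids the issue altogether. Your third paragraph, by contrast, is correct and is a nice, more self-contained alternative to Proposition~\ref{Not_equal}: exhibiting $x_{2l+3}[x_1,x_2]\cdots[x_{2l+1},x_{2l+2}]\in R_l\setminus R_k$ by a parity/degree count in $G$ replaces the paper's indirect appeal to Theorem~\ref{C_limit}.
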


Now we modify the conjecture made in \cite{BKKS}.

\begin{problem}
Let $F$ be an infinite field of characteristic $p>2$. Is each
limit $T$-subspace in $F \langle X \rangle$ equal to either $C(G)$
or $R_k$ for some $k$? In other words, are $C(G)$ and $R_k$ $(k
\ge 1)$ the only limit $T$-subspaces in $F \langle X \rangle$?
\end{problem}

\medskip
In the proof of Theorems \ref{theorem_main2} and
\ref{theorem_main3} we will use the following theorem that has
been proved independently by Bekh-Ochir and Rankin \cite{BOR_cp},
by Brand\~ao Jr., Koshlukov, Krasilnikov and Silva \cite{BKKS} and
by Grishin \cite{Grishin10}. Let
\[
q(x_1,x_2)=x_1^{p-1}[x_1,x_2]x_2^{p-1}, \qquad
q_k(x_1,\dots,x_{2k})=q(x_1,x_{2}) \cdots q(x_{2k-1},x_{2k}).
\]

\begin{theorem}[\cite{BOR_cp}, \cite{BKKS}, \cite{Grishin10}]
\label{generators_of_C} Over an infinite field $F$ of a
characteristic $p>2$ the vector space $C(G)$ of the central
polynomials of $G$ is generated (as a T-space in $F \langle X
\rangle $) by the polynomial
\[
x_1[x_2,x_3,x_4]
\]
and the polynomials
\[
x_1^p \ , \ x_1^p \, q_1(x_2,x_3) \ , \ x_1^p \, q_2(x_2,x_3,x_4,
x_5) \ , \ldots ,\ x_1^p \, q_n(x_2, \ldots , x_{2n+1}) \, ,
\ldots .
\]
\end{theorem}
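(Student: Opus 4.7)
My plan is to split the argument in two parts: verifying that each listed polynomial is central, and conversely showing every central polynomial of $G$ is a $T$-space combination of them.

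The first (easier) direction uses the Latyshev--Krakowski--Regev theorem that the $T$-ideal of polynomial identities of $G$ is generated by $[x_1,x_2,x_3]$. Then $x_1[x_2,x_3,x_4]$ is already an identity of $G$, a fortiori central. For $x_1^p$, I would use the $\mathbb{Z}_2$-grading $G = G_0 \oplus G_1$ together with $Z(G) = G_0$: writing an arbitrary $g \in G$ as $g_0 + g_1$, the relations $g_1^2 = 0$ and $[g_0, g_1] = 0$ make the binomial expansion collapse to $g^p = g_0^p \in Z(G)$. Hence $x_1^p$ is central, and since moreover every commutator $[a,b]$ with $a,b \in G$ lies in $Z(G)$, each $q_n$ evaluated in $G$ is central, and so is $x_1^p\, q_n$.

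For the converse direction, I would work modulo the $T$-ideal of identities of $G$. This $T$-ideal is itself contained in the $T$-space generated by $x_1[x_2,x_3,x_4]$: since a triple commutator is central modulo identities of $G$, the product $u[x_i,x_j,x_k]v$ can be rewritten as $uv\cdot[x_i,x_j,x_k]$ and then produced from $x_1[x_2,x_3,x_4]$ by a suitable endomorphism of $F\langle X\rangle$. In the relatively free algebra every element admits a canonical form as a linear combination of monomials
\[
x_{i_1}^{a_1}\cdots x_{i_r}^{a_r}\cdot [x_{j_1},x_{k_1}]\cdots[x_{j_s},x_{k_s}],
\]
with the (central) commutators collected on the right. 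Decomposing a central polynomial into multihomogeneous components, which is legitimate since $F$ is infinite and centrality is preserved, reduces the problem to analysing one multihomogeneous piece at a time.

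The main obstacle is determining which multihomogeneous pieces in this canonical form are central. The strategy is to evaluate at $g_i = h_i + k_i$ with $h_i \in G_0$, $k_i \in G_1$ and extract the odd-graded part of the result using the $\mathbb{Z}_2$-grading of $G$. The vanishing of this odd part imposes combinatorial constraints forcing the number of commutator factors $s$ to be even and the exponents $a_\ell$ of the variables appearing only in the monomial prefix to be divisible by $p$. Once these constraints are pinned down, the surviving pieces can be recognised as images, under suitable endomorphisms of $F\langle X\rangle$, of products of the shape $x_1^p\, q_n(x_2,\ldots,x_{2n+1})$, possibly combined with further factors absorbed using $T$-space operations. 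The most delicate step is the bookkeeping of this reduction: handling variables that appear both in the monomial prefix and inside a commutator, and carefully applying the standard rewriting identities valid modulo the identities of $G$.
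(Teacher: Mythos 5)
The paper does not actually prove Theorem \ref{generators_of_C}: it is imported wholesale from \cite{BOR_cp}, \cite{BKKS} and \cite{Grishin10}, so there is no in-paper argument to compare yours against. Judged on its own, your easy direction (each listed polynomial is central for $G$, via the $\mathbb{Z}_2$-grading, $g_1^2=0$, and $[a,b]\in G_0=Z(G)$) is fine, as is the reduction of the converse to multihomogeneous elements in the canonical form $x_{i_1}^{a_1}\cdots x_{i_d}^{a_d}[x_{j_1},x_{j_2}]\cdots[x_{j_{2s-1}},x_{j_{2s}}]$ modulo $T^{(3)}=T(G)$.

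The converse, however, has a genuine gap, and also a concrete false claim. The false claim: your ``combinatorial constraints'' allegedly force the number of commutator factors $s$ to be even. This cannot be right, since one of the theorem's own generators, $x_1^p q_1(x_2,x_3)=x_1^p x_2^{p-1}[x_2,x_3]x_3^{p-1}$, has a single commutator factor; the correct constraint is only that each variable occurring in the prefix but in no commutator has exponent divisible by $p$, with no parity condition on $s$. The gap: everything you defer to ``delicate bookkeeping'' is precisely the mathematical content of the theorem (which is why it took three independent papers). Two specific issues are not addressed. First, a central multihomogeneous element need not be a linear combination of central basis monomials: when one computes $[f,x_{N}]$ using $[g_1,g_2][g_3,g_4]\equiv-[g_3,g_2][g_1,g_4]$, contributions from distinct basis monomials land on the same basis element and can cancel (for example, the multilinear element $x_1x_3[x_2,x_4]+x_1x_4[x_2,x_3]+x_2x_3[x_1,x_4]+x_2x_4[x_1,x_3]\equiv[x_1x_2,x_3x_4]$ is central although none of its four basis monomials is), so the classification of central elements is a linear-algebra problem over all monomials of a given multidegree, not a monomial-by-monomial check. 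Second, even for a single central monomial $x_1^{a_1-1}\cdots x_{2k}^{a_{2k}-1}[x_1,x_2]\cdots[x_{2k-1},x_{2k}]$ with arbitrary admissible exponents, showing that it lies in the $T$-space generated by $x_1^p$ and the $x_1^p q_n$ requires controlling which $T$-space $\langle q_k^{(l)}\rangle^{TS}$ it generates as a function of the $p$-adic valuations of the $a_i$ --- this is exactly the Grishin--Tsybulya result quoted as Lemma \ref{lemma_GTs} in the paper, and no substitute for it appears in your outline. Without these two ingredients the proposal is a plausible plan of attack rather than a proof.
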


In order to prove Theorems \ref{theorem_main2} and
\ref{theorem_main3} we need some auxiliary results. Define, for
each $l \ge 0$,
\[
q^{(l)}(x_1,x_2)= x_1^{p^l-1}[x_1,x_2]x_2^{p^l-1},
\]
\[
q_{k}^{(l)}(x_1,\dots,x_{2k})= q^{(l)}(x_1,x_{2}) \cdots
q^{(l)}(x_{2k-1},x_{2k}).
\]
Recall that $C_n = C(G) \cap F \langle X_n \rangle$. To prove
Theorem \ref{theorem_main2} we need the following assertions that
are also of independent interest.

\begin{proposition}
\label{generators_of_C_n} If $n=2k$, $k>1$, then $C_n$ is
generated as a $T$-subspace in $F \langle X_n \rangle$ by the
polynomials
\[
x_1[x_2,x_3,x_4], \quad x_1^p, \quad x_1^p q_1(x_2,x_3), \quad
\ldots , \quad x_1^p q_{k-1}(x_2, \ldots , x_{2k-1})
\]
together with the polynomials
\[
\{ q_k^{(l)}(x_1, \ldots , x_{2k}) \mid l=1,2, \ldots \} .
\]
If $n=2k+1$, $k>1$, then $C_n$ is generated as a $T$-subspace in
$F \langle X_n \rangle$ by the polynomials
\[
x_1[x_2,x_3,x_4], \quad x_1^p, \quad x_1^p q_1(x_2,x_3), \quad
\ldots , \quad x_1^p q_{k}(x_2, \ldots , x_{2k+1}).
\]
\end{proposition}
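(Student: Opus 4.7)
Let $W_n$ denote the $T$-subspace of $F\langle X_n\rangle$ generated by the polynomials listed in the proposition. The inclusion $W_n \subseteq C_n$ follows from Theorem \ref{generators_of_C} together with a short direct verification. For the polynomials $q_k^{(l)}$, one uses the Frobenius identity $(uv)^p = u^p v^p$, valid in the relatively free algebra $A := F\langle X\rangle/T(G)$ (a consequence of the $\mathbb Z_2$-graded description of $G$), to check that $q_k^{(l)} \equiv h^p\,q_k \pmod{T(G)}$ with $h = \prod_{i=1}^{k} x_{2i-1}^{p^{l-1}-1} x_{2i}^{p^{l-1}-1}$. The right-hand side is a substitution of the generator $x_1^p q_k(x_2,\ldots,x_{2k+1})$ of $C(G)$, so $q_k^{(l)} \in C(G) \cap F\langle X_n\rangle = C_n$.

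For the reverse inclusion $C_n \subseteq W_n$, take $f \in C_n$. By Theorem \ref{generators_of_C}, in $F\langle X\rangle$, $f$ is an $F$-linear combination of substitutions into $x_1[x_2,x_3,x_4]$ and $x_1^p q_m(x_2,\ldots,x_{2m+1})$ for $m \geq 0$ (with the convention $q_0 = 1$), whose arguments a priori lie in $F\langle X\rangle$. Since $F$ is infinite, extracting the multihomogeneous component of both sides of total degree zero in $x_{n+1}, x_{n+2}, \ldots$ allows us to assume every substitution argument lies in $F\langle X_n\rangle$. Moreover, the contribution coming from $x_1[x_2,x_3,x_4]$ lies in the $T$-subspace of $F\langle X_n\rangle$ generated by this polynomial; the identity $[a,b,c]\,f = [a,b,cf] - c\,[a,b,f]$ and an easy induction identify this $T$-subspace with $T(G) \cap F\langle X_n\rangle$, which is contained in $W_n$.

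The crucial structural input comes from $A_n := F\langle X_n\rangle/(T(G) \cap F\langle X_n\rangle)$. In its $\mathbb Z_2$-graded description, each generator decomposes as $y_i = a_i + b_i$ with $a_i$ central and $b_i$ odd ($b_i b_j = -b_j b_i$, $b_i^2 = 0$); every commutator $[u,v]$ is then central and equals $2\beta_u\beta_v$, where $\beta_u, \beta_v$ are the odd parts of $u,v$. Consequently, $\prod_{r=1}^{m}[f_{2r-1}, f_{2r}] = 0$ in $A_n$ whenever $2m > n$, since a nonzero contribution would require $2m$ pairwise disjoint odd-index supports while only $n$ indices are available. Every summand $h_j^p\,q_{m_j}(f_{j,1},\ldots,f_{j,2m_j})$ with $m_j > n/2$ may therefore be dropped, leaving $m_j \leq \lfloor n/2\rfloor = k$. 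When $n = 2k+1$, each remaining substitution is into the listed generator $x_1^p q_{m_j}(x_2,\ldots,x_{2m_j+1})$ and already lies in $W_n$, finishing the proof in this case.

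For $n = 2k$, only the tight case $m_j = k$ is delicate, since $x_1^p q_k$ would use $2k+2 > n$ variables. A direct computation in $A_n$ yields
\[
h^p q_k(f_1,\ldots,f_{2k}) \equiv 2^k\, h^p \Bigl(\prod_{i=1}^{2k}\alpha_i^{p-1}\Bigr)\det(c_{ij})\,b_1 b_2\cdots b_{2k} \pmod{T(G)\cap F\langle X_n\rangle},
\]
where $\alpha_i$ is the central part of $f_i$ and $c_{ij}$ is the coefficient of $b_j$ in the linear (degree-one) component of the odd part of $f_i$ (higher-odd contributions drop out because the total odd-index count is pinned at $n=2k$). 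The analogous computation for $q_k^{(l)}(g_1,\ldots,g_{2k})$ gives the same shape with $\alpha_i^{p^l-1}$ in place of $\alpha_i^{p-1}$. Using $(uv)^p = u^p v^p$ together with standard manipulations of $p^l$-th powers in the centre of $A_n$, one then shows that every such expression is an $F$-linear combination of substitutions into the family $\{q_k^{(l)}\}_{l \geq 1}$ modulo $T(G)$. This last reduction, which pins down how the central factor $h^p\prod \alpha_i^{p-1}$ decomposes in the Frobenius filtration of the centre of $A_n$, is the main technical obstacle; it is precisely why the whole infinite family $\{q_k^{(l)}\}$ (and not just $q_k = q_k^{(1)}$) is required when $n = 2k$.
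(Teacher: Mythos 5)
Your overall strategy coincides with the paper's: verify the generators are central, express an arbitrary $f \in C_n$ via Theorem \ref{generators_of_C}, push the substitution arguments into $F \langle X_n \rangle$, discard the terms coming from $x_1^p q_i$ with $2i > n$ (your graded-model argument for why products of more than $n/2$ commutators vanish modulo $T^{(3)}_n$ is the same fact the paper records as equation (\ref{T3iTn3})), and then deal with the boundary case $i = k$, $n = 2k$. Up to that point the proposal is sound, and the easy inclusion (including the Frobenius computation showing $q_k^{(l)} \equiv h^p q_k \pmod{T^{(3)}}$) is correct.

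The problem is that the entire content of the proposition in the even case is concentrated in the step you leave open. After reducing $h_2$ to a combination of monomials $m = x_1^{a_1-1} \cdots x_{2k}^{a_{2k}-1}[x_1,x_2] \cdots [x_{2k-1},x_{2k}]$ (equivalently, to your expression $h^p \bigl(\prod_i \alpha_i^{p-1}\bigr) \det(c_{ij})\, b_1 \cdots b_{2k}$), one still has to prove that each such $m$ lies in the $T$-subspace generated by $T^{(3)}_{2k}$ and the family $\{ q_k^{(l)} \}_{l \ge 1}$. You write that ``one then shows'' this by ``standard manipulations of $p^l$-th powers in the centre'' and yourself call it ``the main technical obstacle''; no argument is given. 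This is not a routine manipulation: it is exactly the statement of Lemma \ref{lemma_GTs} (a reformulation of Grishin--Tsybulya, \cite[Theorem 1.3]{GrishCyb}), which identifies $\langle m \rangle^{TS} + T^{(3)}$ with $Q^{(k,l)} + T^{(3)}$ where $l$ is the minimum of the $p$-adic valuations of the exponents $a_i$ --- a genuinely delicate analysis of the multiplicative structure of the centre of the relatively free algebra, and the reason the infinite family $\{q_k^{(l)}\}$ is needed at all. (A small additional point handled in the paper and absent from your sketch: when the resulting $l$ equals $0$ one must observe that $q_k^{(0)}$ is the multilinear component of $q_k^{(1)}(1+x_1, \ldots , 1+x_{2k})$, so that the generator with $l \ge 1$ still suffices.) As it stands, your argument assumes the hard part rather than proving it, so the proof is incomplete precisely where completeness matters.
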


Let $T^{(3)}$ denote the $T$-ideal in $F \langle X \rangle$
generated by $[x_1,x_2,x_3].$ Define $T^{(3)}_n = T^{(3)} \cap F
\langle X_n \rangle$. We deduce Proposition
\ref{generators_of_C_n} from the following.

\begin{proposition}
\label{generators_of_C_n/T3n} If $n=2k$, $k \ge 1$, then
$C_n/T^{(3)}_n$ is generated as a $T$-subspace in $F \langle X_n
\rangle /T^{(3)}_n$ by the polynomials
\begin{equation}\label{gen_C_n/T3n_1}
x_1^p + T^{(3)}_n, \quad x_1^p q_1(x_2,x_3)+T^{(3)}_n, \quad
\ldots , \quad x_1^p q_{k-1}(x_2, \ldots , x_{2k-1}) + T^{(3)}_n
\end{equation}
together with the polynomials
\begin{equation}\label{gen_C_n/T3n_2}
\{ q_k^{(l)}(x_1, \ldots , x_{2k})+ T^{(3)}_n \mid l=1,2, \ldots
\} .
\end{equation}
If $n=2k+1$, $k \ge 1$, then the $T$-subspace $C_n/T^{(3)}_n$ in
$F \langle X_n \rangle/T^{(3)}_n$ is generated by the polynomials
\begin{equation}\label{gen_C_n/T3n_3}
x_1^p + T^{(3)}_n, \quad x_1^p q_1(x_2,x_3)+T^{(3)}_n, \quad
\ldots , \quad x_1^p q_{k}(x_2, \ldots , x_{2k+1})+ T^{(3)}_n.
\end{equation}
\end{proposition}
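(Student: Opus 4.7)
My plan is to derive Proposition \ref{generators_of_C_n/T3n} from Theorem \ref{generators_of_C} by reducing modulo $T^{(3)}$ and then restricting to polynomials in at most $n$ variables. Since $x_1[x_2,x_3,x_4]\in T^{(3)}$, Theorem \ref{generators_of_C} implies that $C(G)/T^{(3)}$ is generated as a $T$-subspace of $F\langle X\rangle/T^{(3)}$ by the cosets of $x_1^{p}q_{j}(x_2,\ldots,x_{2j+1})$ for $j\ge 0$. The easy inclusion ``$\supseteq$'' is then immediate: each polynomial $x_1^p q_j$ in the stated range uses at most $n$ variables and is central in $G$, while each $q_k^{(l)}(x_1,\ldots,x_{2k})$ lies in $C_{2k}$ because the factors $q^{(l)}(x_{2i-1},x_{2i})$ are central in $G$ by a direct computation using the $\mathbb{Z}_2$-grading of $G$ (one checks that $q^{(l)}(a,b)=2a_0^{p^l-1}b_0^{p^l-1}a_1b_1$, which is even).

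For the nontrivial direction, given $f\in C_n$ I would write
\[
f \equiv \sum_i \alpha_i\, u_i^{p}\, q_{j_i}(v_{i,1},\ldots,v_{i,2j_i})\pmod{T^{(3)}},\qquad \alpha_i\in F,\ u_i,v_{i,s}\in F\langle X\rangle,
\]
and then apply the endomorphism $\psi$ of $F\langle X\rangle$ fixing $x_1,\ldots,x_n$ and sending $x_m\mapsto 0$ for $m>n$. Since $f\in F\langle X_n\rangle$ we have $\psi(f)=f$, and $\psi$ carries $T^{(3)}$ into $T^{(3)}_n$, so we may assume from the start that all $u_i,v_{i,s}$ lie in $F\langle X_n\rangle$.

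The crux is then to show that each summand $u^p q_j(v_1,\ldots,v_{2j})$ with entries in $F\langle X_n\rangle$ can be written modulo $T^{(3)}_n$ as a linear combination of substitutions into the listed generators (\ref{gen_C_n/T3n_1})--(\ref{gen_C_n/T3n_3}). When $j$ is within the stated bound this is immediate, so the work lies in reducing the cases $j\ge k$ (for $n=2k$) or $j\ge k+1$ (for $n=2k+1$). In $F\langle X_n\rangle/T^{(3)}_n$, commutators are central, $p$-th powers are central (since $[a^p,b]=p\,a^{p-1}[a,b]\equiv 0$ in characteristic $p$), and the bracket acts as a derivation in each argument. Using bilinearity of $[\,\cdot\,,\cdot\,]$ together with $[ab,c]=a[b,c]+[a,c]b$, I would expand $q_j(v_1,\ldots,v_{2j})$ as a sum of terms of the form $M(x)\cdot\prod_{s=1}^{j}[x_{a_s},x_{b_s}]$ with $M(x)$ a monomial in $x_1,\ldots,x_n$, and then absorb brackets whose variables already appear with high multiplicity in $M(x)$ into factors of type $q^{(l)}$ with larger exponent $l$.

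The main technical hurdle will be this combinatorial-algebraic reduction. In the odd case $n=2k+1$ a counting argument based on how the $n$ variables are distributed among the $j$ bracket pairs will force a ``free'' variable not locked inside any bracket to appear, providing the $x_1^p$-slot needed to re-express the term as a substitution into $x_1^p q_{j'}$ with $j'\le k$; this gives the finite generating list (\ref{gen_C_n/T3n_3}). In the even case $n=2k$ the same argument handles $j>k$, but at $j=k$ every variable is tied up in some bracket and no free slot is available, so the residual expression can only be a substitution into some $q_k^{(l)}$. This is precisely what forces the infinite family (\ref{gen_C_n/T3n_2}) and explains the genuine asymmetry between the two parity cases.
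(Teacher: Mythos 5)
Your overall strategy coincides with the paper's: invoke Theorem \ref{generators_of_C}, restrict all substituted elements to $F\langle X_n\rangle$, and sort the resulting summands $w^p q_j(f_1,\ldots,f_{2j})$ by the number $j$ of commutator factors. The easy inclusion and the reduction to $F\langle X_n\rangle$ are fine. However, your treatment of the summands with $2j>n$ is misdirected: these terms do not get ``re-expressed as a substitution into $x_1^p q_{j'}$ with $j'\le k$'' by locating a free variable -- they simply vanish modulo $T^{(3)}_n$, because a product of $j$ commutators in fewer than $2j$ variables lies in $T^{(3)}$ (this follows from $[g_1,g_2][g_1,g_3]\in T^{(3)}$ together with the standard basis (\ref{basa}) of $F\langle X\rangle/T^{(3)}$, and is recorded in the paper as the identity $T^{(3,i)}\cap F\langle X_n\rangle=T^{(3)}_n$ for $n<2i$). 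This gap is easily repaired, and once it is, the odd case $n=2k+1$ is finished with no counting argument at all, since every surviving summand is literally a substitution instance of some $x_1^pq_j$ with $j\le k$.

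The essential gap is in the even case $n=2k$ at $j=k$. After expanding modulo $T^{(3)}_{2k}$, the surviving part is a linear combination of monomials $m=x_1^{b_1}\cdots x_{2k}^{b_{2k}}[x_1,x_2]\cdots[x_{2k-1},x_{2k}]$, and you assert that ``the residual expression can only be a substitution into some $q_k^{(l)}$.'' That is precisely the hard content of the proposition, and it is false as a literal statement: such an $m$ is in general \emph{not} a substitution instance of any $q_k^{(l)}$. What is true -- and what the paper uses -- is the Grishin--Tsybulya result (Lemma \ref{lemma_GTs}) that $\langle m\rangle^{TS}+T^{(3)}=Q^{(k,l)}+T^{(3)}$, where $p^l$ is the exact power of $p$ dividing the relevant exponents $a_i=b_i+1$; this equality of $T$-subspaces requires genuine work (substitutions $x_i\mapsto 1+x_i$, extraction of multihomogeneous components, and the Frobenius-type congruences (\ref{prop2})), and your ``absorb brackets into $q^{(l)}$-factors with larger $l$'' sketch does not supply it. You would also need to handle the case $l=0$ separately, since your generating set (\ref{gen_C_n/T3n_2}) starts at $l=1$: the paper does this by observing that $q_k^{(0)}$ is the multilinear component of $q_k^{(1)}(1+x_1,\ldots,1+x_{2k})$, so $Q^{(k,0)}\subseteq Q^{(k,1)}$. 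Without Lemma \ref{lemma_GTs} (or an equivalent substitute) your argument does not close.
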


\noindent \textbf{Remarks. } 1. For each $k \ge 1$, the limit
$T$-subspace $R_k$ does not coincide with the $T$-subspace $C(A)$
of all central polynomials of any algebra $A$.

Indeed, suppose that $R_k = C(A)$ for some $A$. Let $T(A)$ be the
$T$-ideal of all polynomial identities of $A$. Then, for each $f
\in C(A)$ and each $g \in F \langle X \rangle$, we have $[f,g] \in
T(A)$. Since $[x_1,x_2] \in R_k = C(A)$, we have $[x_1,x_2,x_3]
\in T(A)$. It follows that $T^{(3)} \subseteq T(A)$.

It is well-known that if a $T$-ideal $T$ in the free unitary
algebra $F \langle X \rangle$ over an infinite field $F$ contains
$T^{(3)}$ then either $T = T^{(3)}$ or $T = T^{(3, n)}$ for some
$n$ (see, for instance, \cite[Proof of Corollary 7]{gk}). Hence,
either $T(A) = T^{(3)}$ or $T(A) = T^{(3, n)}$ for some $n$. Note
that $T^{(3)} = T(G)$ and $T^{(3,n)} = T(G_{2n-1})$ (see, for
example, \cite{gk}) so we have either $T(A) = T(G)$ or $T(A) =
T(G_{2n-1})$ for some $n$.

For an associative algebra $B$, we have $f(x_1, \ldots , x_r) \in C(B)$ if and only if $[ f(x_1, \ldots , x_r), x_{r+1} ] \in T(B)$. It follows that if $B_1, B_2$ are algebras such that $T(B_1) = T(B_2)$ then $C(B_1) = C(B_2)$. In particular, if $T(A) = T(G)$ then $C(A) = C(G)$, and if $T(A) = T(G_{2n-1})$ then $C(A) = C(G_{2n-1})$.

However,
\[
x_1[x_2,x_3] \ldots [x_{2k+2},x_{2k+3}] \in R_k \setminus C(G)
\]
so $R_k \ne C(G)$. Furthermore, the $T$-subspaces $C(G_s)$ of the
central polynomials of the finite dimensional Grassmann algebras
$G_s$ $(s =2, 3, \ldots )$ have been described recently by
Bekh-Ochir and Rankin \cite{BOR_cpfd} and by Koshlukov,
Krasilnikov and Silva \cite{KKS}; these $T$-subspaces are finitely
generated and do not coincide with $R_k$. This contradiction
proves that $R_k \ne C(A)$ for any algebra $A$, as claimed.

2. For an associative unitary algebra $A$, let $C_n (A)$ and $T_n(A)$
denote the set of the central polynomials and the set of the
polynomial identities in $n$ variables $x_1, \ldots , x_n$ of $A$,
respectively; that is, $C_n (A) = C(A) \cap F \langle X_n \rangle$
and $T_n (A) = T(A) \cap F \langle X_n \rangle$. Then $C_n(A)$ is
a $T$-subspace and $T_n(A)$ is a $T$-ideal in $F \langle X_n \rangle$.

Note that, by Belov's result \cite{Belov10}, the $T$-ideal $T_n(A)$ is
finitely generated for each algebra $A$ over a Noetherian ring and
each positive integer $n$.
On the other hand, there exist unitary algebras $A$ over
an infinite field $F$ of characteristic $p>2$ such that, for some
$n>1$, the $T$-subspace $C_n (A)$ of the central polynomials of
$A$ in $n$ variables is not finitely generated. Moreover, such
an algebra $A$ can be finite dimensional. Indeed, take $A = G_s$,
where $s \ge n$. It can be checked that $C(G_s) \cap F \langle X_n
\rangle = C_n$ if $s \ge n$. By Proposition \ref{C2k_0}, the
$T$-subspace $C_{2k} (G_s)$ in $F \langle X_{2k} \rangle$ is
not finitely generated provided that $s \ge 2k$.

However, the following problem remains open.

\begin{problem}
Does there exist a finite dimensional algebra $A$ over an infinite
field $F$ of characteristic $p >0$ such that the $T$-subspace
$C(A)$ of all central polynomials of $A$ in $F \langle X \rangle$
is not finitely generated?
\end{problem}
Note that a similar problem for the $T$-ideal $T(A)$ of all
polynomial identities of a finite
dimensional algebra $A$ over an infinite field $F$ of characteristic
$p >0$ remains open as well; it is one of the most interesting and
long-standing open problems in the area.

\section{Preliminaries}

Let $\langle S \rangle^{TS}$ denote the $T$-subspace generated by
a set $S \subseteq F \langle X \rangle$. Then $\langle S
\rangle^{TS}$ is the span of all polynomials $f(g_1,\ldots, g_n)$,
where $f\in S$ and $g_i\in F \langle X \rangle$ for all $i$. It is
clear that for any polynomials $f_1,$ \dots, $f_s \in F \langle X
\rangle$ we have $\langle f_1, \dots, f_s \rangle^{TS}= \langle
f_1 \rangle^{TS}+\dots+\langle f_s \rangle^{TS}.$

Recall that a polynomial $f(x_1, \ldots , x_n) \in F \langle X
\rangle$ is called a \textit{polynomial identity} in an algebra
$A$ over $F$ if $f (a_1, \ldots , a_n) =0$ for all $a_1, \ldots ,
a_n \in A$. For a given algebra $A$, its polynomial identities
form a $T$-ideal $T(A)$ in $F \langle X \rangle$ and for every
$T$-ideal $I$ in $F \langle X \rangle$ there is an algebra $A$
such that $I = T(A)$, that is, $I$ is the ideal of all polynomial
identities satisfied in $A.$ Note that a polynomial
$f=f(x_1,\ldots,x_n)$ is central for an algebra $A$ if and only if
$[f,x_{n+1}]$ is a polynomial identity of $A.$

Let $f = f(x_1, \ldots , x_n) \in F \langle X \rangle$. Then $f =
\sum_{0 \le i_1, \ldots , i_n} f_{i_1 \ldots i_n},$ where each
polynomial $f_{i_1 \ldots i_n}$ is multihomogeneous of degree
$i_s$ in $x_s$ $(s = 1, \ldots , n).$ We refer to the polynomials
$f_{i_1 \ldots i_n}$ as to the \textit{multihomogeneous
components} of the polynomial $f.$ Note that if $F$ is an infinite
field, $V$ is a $T$-ideal in $F \langle X \rangle$ and $f \in V$
then $f_{i_1 \ldots i_n} \in V$ for all $i_1, \ldots , i_n$ (see,
for instance, \cite{Baht, drbook, gz, rowenbook}). Similarly, if
$V$ is a $T$-subspace in $F \langle X \rangle$ and $f \in V$ then
all the multihomogeneous components $f_{i_1 \ldots i_n}$of $f$
belong to $V$.

Over an infinite field $F$ the $T$-ideal $T(G)$ of the polynomial
identities of the infinite dimensional unitary Grassmann algebra
$G$ coincides with $T^{(3)}$. This was proved by Krakowski and
Regev \cite{krreg} if $F$ is of characteristic $0$ (see also
\cite{lat}) and by several authors in the general case, see for
example \cite{gk}.

It is well known (see, for example, \cite{krreg,
lat}) that over any field $F$ we have
\begin{eqnarray} \label{prop}
&&[g_1,g_2][g_1,g_3]+T^{(3)}=T^{(3)}; \nonumber\\
&&[g_1,g_2][g_3,g_4]+T^{(3)}=-[g_3,g_2][g_1,g_4]+T^{(3)}; \\
&&[g_1^m,g_2]+T^{(3)}=m g_1^{m-1}[g_1,g_2]+T^{(3)} \nonumber
\end{eqnarray}
for all $g_1,g_2,g_3,g_4 \in F \langle X \rangle$. Also it is well known (see, for
instance, \cite{BKKS, grshchi}) that a basis of the vector space $F\langle X
\rangle/T^{(3)}$ over $F$ is formed by the elements of the form
\begin{eqnarray} \label{basa}
x_{i_1}^{m_1} \cdots x_{i_d}^{m_d} [x_{j_1},x_{j_2}] \cdots
[x_{j_{2s-1}},x_{j_{2s}}]+T^{(3)},
\end{eqnarray}
where $d,s \ge 0$, $i_1< \dots <i_d,$ $j_1<\dots<j_{2s}.$

Define $T_n^{(3)} = T^{(3)} \cap F \langle X_n \rangle.$ We claim
that if $n < 2i$ then
\begin{equation} \label{T3iTn3}
T^{(3,i)} \cap F \langle X_n \rangle = T_n^{(3)}.
\end{equation}
Indeed, a basis of the vector space $( F
\langle X_n \rangle + T^{(3)})/T^{(3)}$ is formed by the elements
of the form (\ref{basa}) such that $1 \le i_1 < \ldots < i_d \le
n,$ $1 \le j_1< \ldots < j_{2s} \le n.$ In particular, we have $2s
\le n.$ On the other hand, it can be easily checked that
$T^{(3,i)}/T^{(3)}$ is contained in the linear span of the
elements of the form (\ref{basa}) such that $s \ge i$. Since $n <
2i$, we have
\[
((F \langle X_n \rangle  + T^{(3)})/T^{(3)}) \cap (T^{(3,i)}/T^{(3)}) = \{ 0 \} ,
\]
that is, $T^{(3,i)} \cap  F \langle X_n \rangle \subseteq
T^{(3)}$. It follows immediately that $T^{(3,i)} \cap  F \langle
X_n \rangle \subseteq T_n^{(3)}$. Since $T_n^{(3)}
\subseteq T^{(3,i)} \cap  F \langle X_n \rangle$ for all $i$,
we have $T^{(3,i)} \cap  F \langle X_n \rangle = T_n^{(3)}$
if $n < 2i$, as claimed.

Let $F$ be a field of characteristic $p>2.$ It is well known (see,
for example, \cite{regevca, BOR_cp, BKKS, GrishCyb}) that, for each
$g, g_1, \ldots , g_n \in F \langle X \rangle$, we have
\begin{eqnarray} \label{prop2}
&& g^p + T^{(3)} \mbox{ is central in } F \langle X \rangle /
T^{(3)}; \nonumber \\
&&(g_1 \cdots g_n)^p+T^{(3)} = g_1^p \cdots g_n^p+T^{(3)};\\
&&(g_1 +\dots + g_n)^p+T^{(3)}= g_1^p +\dots + g_n^p+T^{(3)}.
\nonumber
\end{eqnarray}

Let $F$ be an infinite field of characteristic $p>2$. Let
$Q^{(k,l)}$ be the $T$-subspace in $F\langle X \rangle$ generated
by $q_{k}^{(l)}$ $(l \ge 0)$, $Q^{(k,l)}=\langle q_{k}^{(l)}(x_1,
\dots, x_{2k}) \rangle^{TS}$. Note that the multihomogeneous
component of the polynomial
\begin{eqnarray*}
&&q_k^{(l)}(1+x_1, \ldots , 1+x_{2k}) \\
&&= (1 + x_1)^{p^l-1}[x_1,x_2](1 + x_2)^{p^l-1} \ldots (1 +
x_{2k-1})^{p^l-1}[x_{2k-1},x_{2k}](1 + x_{2k})^{p^l-1}
\end{eqnarray*}
of degree $p^{l-1}$ in all the variables $x_1, \ldots , x_{2k}$ is
equal to
\[
\gamma \, q_k^{(l-1)}(x_1, \ldots , x_{2k}) = \gamma \,
x_1^{p^{l-1}-1}[x_1,x_2]x_2^{p^{l-1}-1} \ldots
x_{2k-1}^{p^{l-1}-1}[x_{2k-1},x_{2k}]x_{2k}^{p^{l-1}-1},
\]
where $\gamma = {p^{l}-1 \choose p^{l-1}-1}^{2k} \equiv 1
\pmod{p}.$ It follows that $q_k^{(l-1)} \in Q^{(k,l)}$ for all $l
>0$ so $Q^{(k,l-1)} \subseteq Q^{(k,l)}$. Hence, for each $l >0$ we have
\begin{equation}\label{sum_q}
\sum \limits_{i=0}^{l} Q^{(k,i)} = Q^{(k,l)}.
\end{equation}

The following lemma is a reformulation of a result of Grishin and
Tsybulya \cite[Theorem 1.3, item 1)]{GrishCyb}.

\begin{lemma}\label{lemma_GTs}
Let $F$ be an infinite field of characteristic $p>2$. Let $k \ge
1$, $a_i \ge 1$ for all $i = 1, 2 \ldots , 2k$ and let
\[
m = x_1^{a_1-1}x_2^{a_2-1} \ldots x_{2k}^{a_{2k}-1}[x_1,x_2]\ldots
[x_{2k-1},x_{2k}] \in F \langle X \rangle.
\]
Suppose that, for some $i_0$, $1 \le i_0 \le 2k$, we have $a_{i_0}
= p^l b$, where $l \ge 0$ and $b$ is coprime to $p$. Suppose also
that, for each $i$, $1 \le i \le 2k$, we have $a_i \equiv 0
\pmod{p^l}$. Then
\[
\langle m \rangle^{TS} + T^{(3)} = Q^{(k,l)} + T^{(3)}.
\]
\end{lemma}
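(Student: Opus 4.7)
The plan is to establish the two inclusions $Q^{(k,l)} + T^{(3)} \subseteq \langle m\rangle^{TS} + T^{(3)}$ and $\langle m\rangle^{TS} + T^{(3)} \subseteq Q^{(k,l)} + T^{(3)}$ separately, working in the basis (\ref{basa}) of $F\langle X\rangle / T^{(3)}$. The starting observation is that, by (\ref{prop}) combined with $[x_i^a,[x_r,x_s]]\in T^{(3)}$ (so commutators are central modulo $T^{(3)}$), one may reorder both $m$ and $q_k^{(l)}$ into canonical monomials:
\[
m \equiv x_1^{a_1-1}\cdots x_{2k}^{a_{2k}-1}[x_1,x_2]\cdots[x_{2k-1},x_{2k}] \pmod{T^{(3)}},
\]
\[
q_k^{(l)} \equiv x_1^{p^l-1}\cdots x_{2k}^{p^l-1}[x_1,x_2]\cdots[x_{2k-1},x_{2k}] \pmod{T^{(3)}}.
\]
Setting $c_i = a_i/p^l$ (well defined by hypothesis, with $c_{i_0}=b$ coprime to $p$), the two canonical representatives share the same commutator block and differ only in their power factors.

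For the inclusion $Q^{(k,l)}+T^{(3)}\subseteq\langle m\rangle^{TS}+T^{(3)}$, I would substitute $x_i\mapsto 1+x_i$ in $m$ to obtain
\[
m(1+x_1,\ldots,1+x_{2k}) = \prod_{i=1}^{2k}(1+x_i)^{a_i-1}\cdot\prod_{j=1}^{k}[x_{2j-1},x_{2j}] \in \langle m\rangle^{TS},
\]
and then extract its multihomogeneous component of degree $p^l$ in each $x_i$, which remains in $\langle m\rangle^{TS}$ since $F$ is infinite. Because each commutator $[x_{2j-1},x_{2j}]$ contributes $1$ to the degree of its two variables, this component has coefficient $\prod_i\binom{a_i-1}{p^l-1}$, and a brief Lucas' theorem argument using the base-$p$ expansion $a_i-1 = (c_i-1)p^l+(p^l-1)$ (whose last $l$ digits all equal $p-1$, exactly matching those of $p^l-1$, while higher digits of $p^l-1$ vanish) shows each such binomial is congruent to $1$ modulo $p$. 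The component therefore equals $q_k^{(l)}$ modulo $T^{(3)}$, giving $q_k^{(l)}\in\langle m\rangle^{TS}+T^{(3)}$.

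For the reverse inclusion, the natural first move is the substitution $x_i\mapsto x_i^{c_i}$ in $q_k^{(l)}$; applying $[g^n,h^r]\equiv nr\,g^{n-1}[g,h]h^{r-1}\pmod{T^{(3)}}$ to each commutator factor produces $\bigl(\prod_i c_i\bigr)\cdot m$ modulo $T^{(3)}$, which closes the case when $\prod_i c_i$ is coprime to $p$. The main obstacle will be the case where some $c_j$ with $j\neq i_0$ is divisible by $p$ (note that $c_{i_0}=b$ is necessarily coprime to $p$ by hypothesis), since then the direct substitution vanishes modulo $p$. At precisely this point the present lemma is an exact reformulation of Theorem~1.3, item~1) of Grishin and Tsybulya \cite{GrishCyb}, and the proof is completed by invoking their result. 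Their argument uses more elaborate substitutions $x_i\mapsto f_i$, with the $f_i$ polynomials in several variables, and extracts the multihomogeneous component of multidegree $(a_1,\ldots,a_{2k})$; the tight hypothesis $a_{i_0}=p^l b$ is essential to ensure that the relevant coefficient, controlled via further Lucas-theorem binomial identities, remains nonzero modulo $p$.
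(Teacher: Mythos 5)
The paper offers no proof of this lemma at all: it is stated as a reformulation of \cite[Theorem 1.3, item 1)]{GrishCyb}, and the reader is simply referred to that source. Your proposal is in substance the same route — your substitution arguments correctly establish $Q^{(k,l)}+T^{(3)}\subseteq\langle m\rangle^{TS}+T^{(3)}$ and the reverse inclusion in the generic case $p\nmid\prod_i c_i$, but the essential remaining case is still delegated to the very same Grishin--Tsybulya theorem, exactly as the paper does.
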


\section{Proof of Propositions \ref{generators_of_C_n} and \ref{generators_of_C_n/T3n}}

In the rest of the paper, $F$ will denote an infinite field of
characteristic $p>2$.

\subsection*{Proof of Proposition \ref{generators_of_C_n/T3n}}
Let $U$ be the $T$-subspace of $F \langle X_n \rangle$ defined as
follows:
\begin{itemize}
\item[i)] $T_n^{(3)} \subset U$;
\item[ii)] the $T$-subspace $U/T_n^{(3)}$ of $F \langle X_n
\rangle/T_n^{(3)}$ is generated by the polynomials
(\ref{gen_C_n/T3n_1}) and (\ref{gen_C_n/T3n_2}) if $n=2k$ and by
the polynomials (\ref{gen_C_n/T3n_3}) if $n=2k+1$.
\end{itemize}
To prove the proposition we have to show that
$C_n/T_n^{(3)}=U/T_n^{(3)}$ (equivalently, $C_n = U$). It can be
easily seen that $U/T_n^{(3)} \subseteq C_n/T_n^{(3)}$. Thus, it
remains to prove that $C_n/T_n^{(3)} \subseteq U/T_n^{(3)}$
(equivalently, $C_n \subseteq U$).

Let $h$ be an arbitrary element of $C_n$. We are going to check
that $h+T_n^{(3)} \in U/T_n^{(3)}$.

Since $h \in C(G)$, it follows from The\-o\-rem
\ref{generators_of_C} that
\[
h=\sum_{j} \alpha_{j} \  v_j^{p} + \sum_{i, j } \alpha_{i j} \
w_{i j}^{p} \  q_i(f_1^{(i j)}, \dots, f_{2i}^{(i j)})+ h',
\]
where $v_j, w_{i j}, f_s^{(i j)} \in F \langle X \rangle$,
$\alpha_j, \alpha_{i j} \in F$, $h' \in T^{(3)}$. Note that $h \in
F \langle X_n \rangle$ so we may assume that $v_j, w_{i j},
f_s^{(i j)}, h' \in F \langle X_n \rangle$ for all $i,j,s$. It
follows that
\[
h + T_n^{(3)} =\sum_{j} \alpha_{j} \  v_j^{p} + \sum_{i, j }
\alpha_{i j} \ w_{i j}^{p} \  q_i(f_1^{(i j)}, \dots, f_{2i}^{(i
j)})+ T_n^{(3)}.
\]

Recall that $T^{(3,i)}$ is the $T$-ideal in $F \langle X \rangle$
generated by the polynomials $[x_1,x_2,x_3]$ and $[x_1,x_2] \dots
[x_{2i-1},x_{2i}]$. By (\ref{T3iTn3}), we have $T^{(3,i)} \cap F
\langle X_n \rangle = T^{(3)}_{n}$ for each $i$ such that
$2i>n$. Since, for each $i, j$,
\[
w_{i j}^p \ q_i(f_1^{(i j)}, \dots, f_{2i}^{(i j)}) \in T^{(3,i)},
\]
we have
\[
 \sum_{i > \frac{n}{2}} \sum_{ j } \alpha_{i j} \ w_{i j}^{p}
 \  q_i(f_1^{(i j)}, \dots, f_{2i}^{(i j)}) \in T^{(3,i)} \cap F
\langle X_n \rangle = T_n^{(3)}.
\]
It follows that
\[
h + T_n^{(3)} =\sum_{j} \alpha_{j} \  v_j^{p} + \sum_{i \le
\frac{n}{2}} \sum_{ j } \alpha_{i j} \ w_{i j}^{p} \  q_i(f_1^{(i
j)}, \dots, f_{2i}^{(i j)})+ T_n^{(3)}.
\]

If $n=2k+1$ ($k \ge 1$) then we have
\[
h + T^{(3)}_{n} = \sum_{j} \alpha_{j} v_{j}^{p} + \sum_{i=1}^k
\sum_j \alpha_{i j} \ w_{i j}^{p} \ q_i(f_1^{(i j)}, \dots,
f_{2i}^{(i j)}) + T^{(3)}_{n}
\]
so $h + T^{(3)}_{n} \in U/T_n^{(3)}$, as required.

If $n=2k$ ($k \ge 1$) then we have
\[
h + T^{(3)}_{n} = h_1 + h_2 + T^{(3)}_{n},
\]
where
\[
h_1 = \sum_{j} \alpha_{j} v_j^{p} + \sum_{i=1}^{k-1} \sum_j
\alpha_{i j} \ w_{i j}^{p} \  q_i(f_1^{(i j)},\dots,f_{2i}^{(i
j)})
\]
and
\[
h_2 = \sum_j \alpha_{k j} \ w_{k j}^{p} \ q_k(f_1^{(k j)},\dots,
f_{2k}^{(k j)}).
\]
It is clear that $h_1 + T^{(3)}_{n}$ belongs to the $T$-subspace
generated by the polynomials (\ref{gen_C_n/T3n_1}); hence, $h_1 +
T^{(3)}_{n} \in U/T^{(3)}_{n}$. On the other hand, it can be
easily seen that $h_2 + T^{(3)}_{n}$ is a linear combination of
polynomials of the form $m + T^{(3)}_{n}$, where
\[
m = x_1^{b_1} \cdots x_{2k}^{b_{2k}}[x_1,x_2] \cdots [x_{2k-1},x_{2k}].
\]
We claim that, for each $m$ of this form, the polynomial $m + T_{2k}^{(3)}$ belongs to $U/T_{2k}^{(3)}$.

Indeed, by Lemma \ref{lemma_GTs}, we have $\langle m \rangle^{TS} + T^{(3)} = \langle q_k^{(l)} \rangle^{TS} + T^{(3)}$ for some $l \ge 0$. Since both $m$ and $q_k^{(l)}$ are polynomials in $x_1, \dots , x_{2k}$, this equality implies that $m + T_{2k}^{(3)}$ belongs to the $T$-subspace of $F \langle X_{2k} \rangle / T_{2k}^{(3)}$ that is generated by $q_k^{(l)} + T_{2k}^{(3)}$ for some $l \ge 0$. If $l \ge 1$ then $m + T_{2k}^{(3)}\in U/T_{2k}^{(3)}$ because, for $l \ge 1$, $q_k^{(l)} + T_{2k}^{(3)}$ is a polynomial of the form (\ref{gen_C_n/T3n_2}). If $l = 0$ then $m + T_{2k}^{(3)}$ belongs to the $T$-subspace of $F \langle X_{2k} \rangle / T_{2k}^{(3)}$ generated by $q_k^{(1)} + T_{2k}^{(3)}$. Indeed, in this case $m + T_{2k}^{(3)}$ belongs to the $T$-subspace generated by $q_k^{(0)} + T_{2k}^{(3)}$ and the latter $T$-subspace is contained in the $T$-subspace generated by $q_k^{(1)} + T_{2k}^{(3)}$ because $q_k^{(0)}$ is equal to the multilinear component of $q_k^{(1)}(1+x_1, \dots , 1+x_{2k})$. It follows that, again, $m + T_{2k}^{(3)}\in U/T_{2k}^{(3)}$. This proves our claim.

It follows that $h_2 + T^{(3)}_{n} \in U/T^{(3)}_{n}$ and,
therefore, $h + T^{(3)}_{n} \in U/T^{(3)}_{n}$, as required.

Thus, $C_n \subseteq U$ for each $n$. This completes the proof of
Proposition \ref{generators_of_C_n/T3n}.

\subsection*{Proof of Proposition \ref{generators_of_C_n}}
It is clear that the polynomial $x_1 [x_2,x_3,x_4] x_5$ generates
$T^{(3)}$ as a $T$-subspace in $F \langle X \rangle$. Since
$g_1 [g_2,g_3,g_4] g_5=g_1 [g_2,g_3,g_4, g_5] + g_1 g_5
[g_2,g_3,g_4]$
for all
$g_i \in F \langle X \rangle,$
the polynomial $x_1 [x_2,x_3,x_4]$ generates $T^{(3)}$ as a
$T$-subspace in $F \langle X \rangle$ as well. It follows that
$x_1 [x_2,x_3,x_4]$ generates $T_n^{(3)}$ as a $T$-subspace in $F
\langle X_n \rangle$ for each $n \ge 4$. Proposition
\ref{generators_of_C_n} follows immediately from Proposition
\ref{generators_of_C_n/T3n} and the observation above.

\section{Proof of Theorem \ref{theorem_main2} }

If $n = 2k +1$, $k > 1$, then Theorem \ref{theorem_main2} follows
immediately from Proposition \ref{generators_of_C_n}.

Suppose that $n = 2k$, $k \ge 1$. Then Theorem \ref{theorem_main2}
is an immediate consequence of the following two propositions.

\begin{proposition} \label{C2k_0}
For all $k \ge 1$, $C_{2k}$ is not finitely generated as a
$T$-subspace in $F \langle X_{2k} \rangle$.
 \end{proposition}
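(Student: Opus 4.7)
The plan is to proceed by contradiction. Suppose $C_{2k}$ is finitely generated as a $T$-subspace. Because $T_{2k}^{(3)}$ is itself finitely generated (by $x_1[x_2, x_3, x_4]$, as observed earlier), this is equivalent to finite generation of $C_{2k}/T_{2k}^{(3)}$. By Proposition~\ref{generators_of_C_n/T3n}, any generating set may be drawn from $\{x_1^p\} \cup \{x_1^p q_i \mid 1 \le i < k\} \cup \{q_k^{(l)} \mid l \ge 1\}$, and a finite one can involve only $q_k^{(l)}$ for $l \le L$ with some $L \ge 1$. Thus we may assume $C_{2k}/T_{2k}^{(3)} = \langle S_L \rangle^{TS} + T_{2k}^{(3)}$ where $S_L = \{x_1^p,\ x_1^p q_i,\ q_k^{(l)} \mid 1 \le i < k,\ 1 \le l \le L\}$.

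To produce a contradiction I consider the linear functional $\varphi$ on $F\langle X_{2k}\rangle/T_{2k}^{(3)}$ that reads off the coefficient in the basis~(\ref{basa}) at the monomial $m_{L+1} := x_1^{p^{L+1}-1}\cdots x_{2k}^{p^{L+1}-1}[x_1, x_2]\cdots[x_{2k-1}, x_{2k}]$. The polynomial $q_k^{(L+1)}$ lies in $C_{2k}$ (a direct calculation in $G$ using $x^{p^{L+1}} \in Z(G)$ confirms $q^{(L+1)}(a, b) \in Z(G)$), and its image modulo $T_{2k}^{(3)}$ reduces to precisely $m_{L+1}$ (rearrangement of the central commutator factors $[x_{2i-1}, x_{2i}]$ to the right introduces no correction, since additional commutators would exceed the maximum $k$ in $F\langle X_{2k}\rangle/T_{2k}^{(3)}$). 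Hence $\varphi(q_k^{(L+1)}) = 1$, so it suffices to show that $\varphi$ vanishes identically on $\langle S_L \rangle^{TS}$, i.e., on every substitution $f(g_1, g_2, \ldots)$ with $f \in S_L$ and $g_j \in F\langle X\rangle$. For $f = x_1^p$, the identity $(uv)^p \equiv u^p v^p \pmod{T^{(3)}}$ from~(\ref{prop2}) together with $[g_1, g_2]^2 \equiv 0 \pmod{T^{(3)}}$ from~(\ref{prop}) shows that $g^p$ modulo $T^{(3)}$ is commutator-free, hence contributes zero to $\varphi$ (which reads a $k$-commutator coefficient).

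The remaining vanishing statements, $\varphi(\langle x_1^p q_i\rangle^{TS}) = 0$ for $1 \le i < k$ and $\varphi(\langle q_k^{(l)}\rangle^{TS}) = 0$ for $1 \le l \le L$, constitute the main technical obstacle. By multihomogeneity and bilinearity of substitution, it suffices to compute $\varphi(f(x^{\mathbf{a}^{(1)}}, \ldots, x^{\mathbf{a}^{(d)}}))$ for monomial inputs; using the formula $[x^{\mathbf{a}}, x^{\mathbf{b}}] \equiv \sum_{j < j'}(a_j b_{j'} - a_{j'} b_j)\, x^{\mathbf{a} + \mathbf{b} - e_j - e_{j'}}[x_j, x_{j'}] \pmod{T^{(3)}}$ together with identities~(\ref{prop}), (\ref{prop2}), one expresses $\varphi(f(\cdots))$ as a signed sum, indexed by pair-partitions of $\{1, \ldots, 2k\}$ and assignments of pairs to commutator factors, of products of $2 \times 2$ minors of the matrix $(a^{(r)}_s)$; the signs arise from reducing products like $[x_j, x_{j'}][x_{j''}, x_{j'''}]$ to the standard basis form $[x_1, x_2]\cdots[x_{2k-1}, x_{2k}]$ via the antisymmetry identity $[g_1, g_2][g_3, g_4] \equiv -[g_3, g_2][g_1, g_4]$ of~(\ref{prop}). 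The column-sum constraint imposed by the degree requirement $\sum_r \mathbf{a}^{(r)} = p^{L+1-l}\mathbf{1}$ then forces this signed sum to be divisible by $p$: in the case $k = 1$ it reduces to the elementary observation $(a^{(1)}_1 - a^{(1)}_2)\,p^{L+1-l} \equiv 0 \pmod p$, while the general case demands a careful combinatorial identity of Pfaffian type --- and in particular one must verify that the correction terms arising from $g_j^{p-1}$ factors in substitutions of $x_1^p q_i$ also collectively contribute zero modulo $p$. Carrying out this divisibility calculation is the technical heart of the proof.
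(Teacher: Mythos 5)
Your reduction is sound as far as it goes: you correctly use Proposition~\ref{generators_of_C_n/T3n} to replace $C_{2k}$ by the generators $x_1^p$, $x_1^pq_i$ $(i<k)$, $q_k^{(l)}$ $(l\ge 1)$ together with $T_{2k}^{(3)}$, and correctly observe that finite generation would force $C_{2k}=\langle S_L\rangle^{TS}+T_{2k}^{(3)}$ for some $L$, so that it suffices to show $q_k^{(L+1)}\notin\langle S_L\rangle^{TS}+T_{2k}^{(3)}$. This is exactly the ascending-chain setup $V_1\subseteq V_2\subseteq\cdots$ with $C_{2k}=\bigcup_l V_l$ used in the paper, and your observation that $q_k^{(L+1)}$ reduces modulo $T^{(3)}$ to the single basis element $m_{L+1}$ is also fine.

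The gap is that you never prove the one statement on which everything rests, namely that your coefficient functional $\varphi$ vanishes on $\langle x_1^pq_i\rangle^{TS}$ $(1\le i<k)$ and on $\langle q_k^{(l)}\rangle^{TS}$ $(1\le l\le L)$ in the relevant multidegree. You describe the shape of a computation (pair-partitions, $2\times 2$ minors, a ``Pfaffian-type'' identity) and then explicitly defer it as ``the technical heart of the proof''. But that divisibility statement \emph{is} the proposition: everything preceding it is routine bookkeeping. The paper does not attempt this computation either; it invokes a separation theorem of Grishin and Tsybulya (\cite[Theorem 3.1]{GrishCyb}, restated as Proposition~\ref{G-Ts}), which asserts precisely the needed non-inclusion $Q^{(k,l+1)}\not\subseteq U^{(k-1)}+Q^{(k,l)}+T^{(3,k+1)}$ modulo $T^{(3)}$, and then derives $q_k^{(l+1)}\notin V_l$. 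Note also that your proposed vanishing of $\varphi$ on all of $\langle S_L\rangle^{TS}$ is formally stronger than the non-membership $q_k^{(L+1)}\notin V_L$ that is actually needed, so you have taken on an extra burden of proof. Moreover, your reduction to monomial inputs ``by bilinearity'' is not legitimate: substitution $f\mapsto f(g_1,\dots,g_n)$ is not linear in the $g_j$ (the generators have high degree in each variable), so one must take the multihomogeneous components of substitutions of sums of monomials, which produces cross-terms governed by partial linearizations of $f$ that your sketch does not address. As written, the proof is incomplete at its only nontrivial point; either carry out the divisibility computation in full or cite the Grishin--Tsybulya result as the paper does.
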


\begin{proposition}\label{C_2k_limit}
Let $k \ge 1$ and let $W$ be a $T$-subspace of $F \langle X_{2k}
\rangle$ such that $C_{2k} \subsetneqq W$. Then $W$ is a finitely
generated $T$-subspace in $F \langle X_{2k} \rangle$.
\end{proposition}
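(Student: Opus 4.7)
The plan is to exhibit a finite generating set for $W$. I would pick a multihomogeneous $w \in W \setminus C_{2k}$ (possible since over an infinite field $T$-subspaces are closed under multihomogeneous components). Working modulo $T_{2k}^{(3)}$ and expanding $w$ in the basis~(\ref{basa}), I split off the central basis monomials---those $x_{i_1}^{m_1}\cdots x_{i_d}^{m_d}[x_{j_1},x_{j_2}]\cdots[x_{j_{2s-1}},x_{j_{2s}}]$ for which $p\mid m_r$ whenever $i_r\notin J:=\{j_1,\ldots,j_{2s}\}$ (a characterization that follows from computing $[\cdot,x_{2k+1}]$ modulo $T^{(3)}$ using the identities~(\ref{prop}))---which lie in $C_{2k}$ and may be discarded. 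So $w$'s image modulo $T_{2k}^{(3)}$ may be assumed to be a nonzero sum of non-central basis monomials, each having a non-divisible exponent $m_r$ on some $x_{i_r}$ with $i_r\notin J$.

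The core technical claim is: there exists $l_0=l_0(w)\ge 1$ such that $q_k^{(l)}\in\langle w\rangle^{TS}+T_{2k}^{(3)}$ for every $l\ge l_0$. To prove it, one uses substitutions into $w$ that enlarge the commutator-support of a non-central monomial up to the full set $\{1,\ldots,2k\}$: the key expansion
\[
(x_i x_j)^{m_i} \equiv x_i^{m_i} x_j^{m_i} - \binom{m_i}{2}\, x_i^{m_i-1} x_j^{m_i-1}[x_i,x_j] \pmod{T^{(3)}}
\]
introduces a new commutator $[x_i,x_j]$ whenever $j\notin J$ and $\binom{m_i}{2}\not\equiv 0\pmod{p}$, with variant substitutions (e.g.\ $x_i\mapsto x_i x_j+\alpha x_i^2 x_j^2$) covering the residual cases. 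Iterating yields a top-shape monomial $m=x_1^{a_1-1}\cdots x_{2k}^{a_{2k}-1}[x_1,x_2]\cdots[x_{2k-1},x_{2k}]$; Lemma~\ref{lemma_GTs} then identifies $\langle m\rangle^{TS}+T^{(3)}=Q^{(k,l)}+T^{(3)}$ for $l$ equal to the common $p$-adic valuation of the $a_i$, and varying substitutions allows any prescribed $l\ge l_0$ to be realized.

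A parallel reachability argument shows that $W$ is generated modulo $C_{2k}$ by finitely many such $w_1,\ldots,w_r$, since the non-central basis monomials visible in $W$ fall into finitely many substitution-equivalence types. Then a finite generating set for $W$ is
\[
\{w_1,\ldots,w_r\}\cup\{x_1[x_2,x_3,x_4],\, x_1^p,\, x_1^p q_1, \ldots, x_1^p q_{k-1}\}\cup\{q_k^{(1)},\ldots,q_k^{(l_0)}\}\,:
\]
the first subset generates $W$ modulo $C_{2k}$, while the remaining subsets together with the $q_k^{(l)}$ for $l>l_0$ (supplied by $\langle w_1\rangle^{TS}$ via the key claim) generate $C_{2k}$ by Proposition~\ref{generators_of_C_n}. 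Hence $W$ is finitely generated. The main obstacle is the combinatorial reachability analysis in the second paragraph: one must both control the range of $l\ge l_0$ obtainable from a fixed $w$ via substitutions and bound the number of substitution-types needed to recover $W$ modulo $C_{2k}$. Careful case-analysis of how substitutions interact with commutator-supports and $p$-adic valuations of exponents is at the heart of the argument.
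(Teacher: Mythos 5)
Your overall strategy is sound and close in spirit to the paper's: find finitely many polynomials of $W\setminus C_{2k}$ which, together with a finite initial segment of the generators of $C_{2k}$, absorb the infinite tail $\{q_k^{(l)}\}$. But the proof of your core technical claim does not work as proposed, and the failure is not a technicality. Your mechanism is: use $(x_ix_j)^{m}\equiv x_i^{m}x_j^{m}-\binom{m}{2}x_i^{m-1}x_j^{m-1}[x_i,x_j]$ to enlarge the commutator support of a non-central monomial until you reach a full-support monomial $m=x_1^{a_1-1}\cdots x_{2k}^{a_{2k}-1}[x_1,x_2]\cdots[x_{2k-1},x_{2k}]$, and then invoke Lemma~\ref{lemma_GTs}. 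However, Lemma~\ref{lemma_GTs} gives $\langle m\rangle^{TS}+T^{(3)}=Q^{(k,l)}+T^{(3)}$ for the \emph{single} value of $l$ determined by the $p$-adic valuations of the exponents of $m$, and by (\ref{sum_q}) this contains $q_k^{(l')}$ only for $l'\le l$; by Proposition~\ref{G-Ts} (Grishin--Tsybulya) it does \emph{not} contain $q_k^{(l')}$ for $l'>l$. So producing one (or finitely many) full-support monomials from $w$ can never yield $q_k^{(l)}$ for all $l\ge l_0$, and your assertion that ``varying substitutions allows any prescribed $l\ge l_0$ to be realized'' is exactly the point at issue and is left unjustified; substitutions that raise valuations (e.g.\ $x_i\mapsto x_i^{p^t}$) kill the commutators you need, since $[x^{p^t},y]\equiv p^tx^{p^t-1}[x,y]\equiv 0 \pmod{T^{(3)}}$, and the coefficient $\binom{m}{2}$ vanishes mod $p$ once $p^2\mid m$. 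In short, pushing a variable with $p\nmid m_r$ \emph{into} a commutator is the wrong direction.

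The missing idea is to exploit that variable in the opposite way: substitute $x_{i_r}\mapsto x_{i_r}+1$ and extract the component of degree $1$ in $x_{i_r}$ (the coefficient $\binom{m_r}{1}=m_r$ is nonzero mod $p$), so that after the standard reduction you obtain a polynomial $x_1\,g(x_2,\ldots)$ with a degree-one variable $x_1$ \emph{outside} all commutators, and eventually $x_1[x_2,x_3]\cdots[x_{2\theta},x_{2\theta+1}]$ with $\theta\le k-1$ (this is Lemma~\ref{WC(G)} in the paper). That free degree-one slot is what lets you substitute arbitrary elements and generate $q_k^{(l)}$ for \emph{every} $l$ simultaneously, since $q_k^{(l)}\equiv g_1[g_2,g_3]\cdots[g_{2\theta},g_{2\theta+1}] \pmod{T^{(3)}}$ when $\theta<k$. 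This also repairs your second gap: the ``finitely many substitution-equivalence types'' you invoke to get $w_1,\ldots,w_r$ are never defined or counted, whereas the classification via Lemma~\ref{WC(G)} shows each $\langle f\rangle^{TS}+C_{2k}$ equals one of at most $k$ explicitly listed $T$-subspaces (indexed by $\theta\in\{1,\ldots,k-1\}$, plus $F\langle X_{2k}\rangle$), so that $W$ is the $T$-subspace generated by $C_{2k}$ and the single polynomial $x_1[x_2,x_3]\cdots[x_{2\lambda},x_{2\lambda+1}]$ with $\lambda$ minimal, from which finite generation follows by Proposition~\ref{generators_of_C_n}.
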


\subsection*{Proof of Proposition \ref{C2k_0}}

The proof is based on a result of Grishin and Tsybulya
\cite[Theorem 3.1]{GrishCyb}.

By Proposition \ref{generators_of_C_n/T3n}, $C_{2k}$ is generated
as a $T$-subspace in $F \langle X_{2k} \rangle$ by $T_{2k}^{(3)}$
together with the polynomials
\begin{equation}\label{gen_C2k}
x_1^p,\  x_1^p q_1 (x_2,x_3), \ \ldots , \ x_1^p q_{k-1}(x_2,
\ldots , x_{2k-1})
\end{equation}
and
\[
\{ q_k^{(l)} (x_1, \ldots , x_{2k}) \mid l = 1, 2, \ldots \} .
\]
Let $V_l$ be the $T$-subspace of $F \langle X_{2k} \rangle $
generated by $T_{2k}^{(3)}$ together with the polynomials
(\ref{gen_C2k}) and the polynomials $ \{ q_k^{(i)}(x_1, \ldots ,
x_{2k}) \mid i \le l \} $. Then we have
\begin{equation}\label{union_wl}
C_{2k} = \bigcup_{l \ge 1} V_l.
\end{equation}
Also, it is clear that $V_1 \subseteq V_2 \subseteq \ldots .$

Let $U^{(k-1)}$ be the $T$-subspace in $F \langle X \rangle$
generated by the polynomials (\ref{gen_C2k}). The following
proposition is a particular case of \cite[Theorem 3.1]{GrishCyb}.
\begin{proposition}[\cite{GrishCyb}]\label{G-Ts}
For each $l \ge 1$,
\[
(Q^{(k,l+1)}+T^{(3)}) /T^{(3)} \not \subseteq (U^{(k-1)}+
Q^{(k,l)} + T^{(3,k+1)})/T^{(3)}.
\]
\end{proposition}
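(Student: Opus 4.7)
The proposed witness is $q_k^{(l+1)}(x_1,\ldots,x_{2k})+T^{(3)}$, which certainly lies in $(Q^{(k,l+1)}+T^{(3)})/T^{(3)}$. Suppose for contradiction that $q_k^{(l+1)}\equiv u+q+t\pmod{T^{(3)}}$ with $u\in U^{(k-1)}$, $q\in Q^{(k,l)}$, $t\in T^{(3,k+1)}$; I would extract the multihomogeneous component of multi-degree $(p^{l+1},\ldots,p^{l+1})$ in the variables $(x_1,\ldots,x_{2k})$ and compare coefficients in the basis (\ref{basa}) of $F\langle X\rangle/T^{(3)}$.

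Since the targeted component lies in $F\langle X_{2k}\rangle$, and by (\ref{T3iTn3}) we have $T^{(3,k+1)}\cap F\langle X_{2k}\rangle = T_{2k}^{(3)}\subseteq T^{(3)}$ (as $2k<2(k+1)$), the $t$-contribution is absorbed into $T^{(3)}$. The problem thus reduces to showing that in $F\langle X_{2k}\rangle/T_{2k}^{(3)}$ the distinguished basis element $\bar m := x_1^{p^{l+1}-1}\cdots x_{2k}^{p^{l+1}-1}[x_1,x_2][x_3,x_4]\cdots[x_{2k-1},x_{2k}]+T^{(3)}$, to which $q_k^{(l+1)}$ reduces via the identities (\ref{prop}), cannot appear in the sum of the corresponding multi-degree components of $u$ and $q$.

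For $q\in Q^{(k,l)}$: Lemma~\ref{lemma_GTs} combined with (\ref{sum_q}) implies that any basis element of the shape $x_1^{a_1-1}\cdots x_{2k}^{a_{2k}-1}[x_1,x_2]\cdots[x_{2k-1},x_{2k}]$ arising in $Q^{(k,l)}+T^{(3)}$ satisfies $\min_i v_p(a_i)\le l$, whereas $\bar m$ has all $a_i=p^{l+1}$, giving minimum valuation $l+1$. For $u\in U^{(k-1)}$: every generator has the form $x_1^p\,q_j(x_2,\ldots,x_{2j+1})$ with $j\le k-1$, and under any substitution the factor $g_1^p$ is commutator-free modulo $T^{(3)}$ by (\ref{prop2}). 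Exploiting the nilpotence identities $[g_1,g_2]^2\equiv 0$ and $[g_1,g_2][g_1,g_3]\equiv 0\pmod{T^{(3)}}$ together with Leibniz-style expansion of the remaining $g_i^{p-1}$ and $[g_{2i},g_{2i+1}]$ factors, one then shows that the coefficient of $\bar m$ in any $U^{(k-1)}$-substitution is divisible by $p$, and hence vanishes in characteristic $p$.

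Combining the two observations provides the desired contradiction. The main technical obstacle is the $U^{(k-1)}$-analysis: the $Q^{(k,l)}$-part follows directly from Lemma~\ref{lemma_GTs}, but bounding the $\bar m$-coefficient of arbitrary substitutions into the generators of $U^{(k-1)}$ requires a careful combinatorial argument tracking how commutators inherited from the $g_i^{p-1}$-factors interact with the explicit commutators in $q_j$ modulo $T^{(3)}$. This combinatorial core is the technical content of \cite[Theorem~3.1]{GrishCyb}.
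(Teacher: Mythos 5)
The paper gives no proof of this proposition: it is imported wholesale as ``a particular case of \cite[Theorem 3.1]{GrishCyb}'', with only a remark translating the notation ($U^{(k-1)}$, $Q^{(k,l)}$, $T^{(3,k+1)}$ into $\sum_{i<k} CD_p^{(i)}$, $C_{p^l}^{(k)}$, $C^{(k+1)}$). Your reductions at the start are correct and worth having: the witness $q_k^{(l+1)}$ is the right one, passing to the multihomogeneous component of multidegree $(p^{l+1},\dots,p^{l+1})$ is legitimate over an infinite field, the identification of $q_k^{(l+1)}$ with the basis element $\bar m$ modulo $T^{(3)}$ is correct, and absorbing the $T^{(3,k+1)}$-contribution via (\ref{T3iTn3}) is exactly right since $2k<2(k+1)$.

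The gap is in the two exclusion claims, and it is not confined to the $U^{(k-1)}$ part that you explicitly defer to \cite{GrishCyb}. Your assertion that the $Q^{(k,l)}$ half ``follows directly from Lemma~\ref{lemma_GTs}'' is not justified. That lemma computes the $T$-subspace $\langle m\rangle^{TS}+T^{(3)}$ generated by a \emph{single monomial} $m$; applied to $\bar m$ it only says $\langle \bar m\rangle^{TS}+T^{(3)}=Q^{(k,l+1)}+T^{(3)}$, which reduces the proposition to showing that the chain $Q^{(k,0)}\subseteq Q^{(k,1)}\subseteq\cdots$ remains strictly increasing after adding $U^{(k-1)}+T^{(3,k+1)}$ --- i.e.\ to the proposition itself. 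The lemma gives no control over which basis elements (\ref{basa}) can occur with nonzero coefficient in a general element of $Q^{(k,l)}$, which is a sum of many substitution instances $q_k^{(l)}(g_1,\dots,g_{2k})$; isolating the coefficient of $\bar m$ in such a sum is precisely the combinatorial content of \cite[Theorem 3.1]{GrishCyb}, for the $Q^{(k,l)}$ part no less than for the $U^{(k-1)}$ part. So your argument, like the paper's, ultimately rests entirely on the citation; to the extent it claims to establish one of the two halves independently, that step does not go through as written.
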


\noindent \textbf{Remark. } The $T$-subspaces $(U^{(k-1)} +
T^{(3)}) / T^{(3)}$, $(Q^{(k,l)}+T^{(3)})/T^{(3)}$ and
$T^{(3,k+1)}/T^{(3)}$ are denoted in \cite{GrishCyb} by
$\sum_{i<k} CD_p^{(i)}$, $C_{p^l}^{(k)}$ and $C^{(k+1)}$,
respectively.

\medskip

Since the $T$-subspace $Q^{(k,l+1)}$ is generated by the
polynomial $q_k^{(l+1)} $ and $T^{(3)} \subset T^{(3,k+1)}$,
Proposition \ref{G-Ts} immediately implies that
\[
q_k^{(l+1)} \notin U^{(k-1)} + Q^{(k,l)} + T^{(3,k+1)}.
\]
Further, since $T_{2k}^{(3)} \subset T^{(3)} \subset  T^{(3,k+1)}$, we have
\[
V_l \subset U^{(k-1)} + \sum_{i \le l} Q^{(k,i)} + T^{(3,k+1)} =
U^{(k-1)} + Q^{(k,l)} + T^{(3,k+1)}
\]
(recall that, by (\ref{sum_q}), $\sum_{i \le l} Q^{(k,i)} =
Q^{(k,l)}$). It follows that  $q_k^{(l+1)} \notin V_l$ for all $l
\ge 1$; on the other hand, $q_k^{(l+1)} \in V_{l+1}$ by the
definition of $V_{l+1}$. Hence,
\begin{equation}\label{strictly_ascending_wl}
V_1 \subsetneqq V_2  \subsetneqq \ldots .
\end{equation}
It follows immediately  from (\ref{union_wl}) and
(\ref{strictly_ascending_wl}) that $C_{2k}$ is not finitely
generated as a $T$-subspace in $F \langle X_{2k} \rangle $. The
proof of Proposition \ref{C2k_0} is completed.

\subsection*{Proof of the Proposition \ref{C_2k_limit}}

For all integers $i_1, \ldots , i_t$ such that $1 \leq i_1<\ldots <i_t \leq n$ and all integers $a_1, \ldots , a_n \ge 0$ such that $a_{i_1}, \ldots , a_{i_t} \ge 1$, define  $\frac{x_1^{a_1} x_2^{a_2} \ldots x_n^{a_n}}{x_{i_1} x_{i_2} \ldots x_{i_t}}$ to be the monomial
 \[
 \frac{x_1^{a_1} x_2^{a_2} \ldots x_n^{a_n}}{x_{i_1}x_{i_2}\ldots x_{i_t}}=  x_1^{b_1}x_2^{b_2}\ldots x_n^{b_n} \in F \langle X \rangle,
 \]
where $b_j=a_j-1$ if $j\in \{i_1,i_2,\ldots,i_t\}$ and $b_j=a_j$ otherwise.

\begin{lemma} \label{WC(G)}
Let $f(x_1,\dots,x_n) \in F \langle X \rangle$ be a multihomogeneous polynomial of the form
\begin{equation}\label{f_modulo_T3}
f=\alpha \, x_1^{a_1} \ldots x_n^{a_n}+\sum \limits_{1 \leq i_1<\ldots <i_{2t} \leq n }
\alpha_{(i_1,\ldots,i_{2t})}\frac{x_1^{a_1}\ldots x_n^{a_n}}{x_{i_1}\ldots x_{i_{2t}}}[x_{i_1},x_{i_2}]\ldots
[x_{i_{2t-1}},x_{i_{2t}}]
\end{equation}
where $\alpha, \alpha_{(i_1,\ldots,i_{2t})}\in F$. Let $L = \langle f \rangle^{TS} + \langle [x_1,x_2] \rangle ^{TS}  +T^{(3)}$.

Suppose that $a_i = 1$ for some $i$, $1 \le i \le n.$  Then either $L = F \langle X \rangle$ or $L = \langle [x_{1},x_2] \rangle^{TS} + T^{(3)}$ or $L = \langle x_1 [x_2,x_3] \ldots [x_{2 \theta},x_{2 \theta + 1}] \rangle^{TS} +  \langle [x_{1},x_2] \rangle^{TS} + T^{(3)}$ for some $\theta \leq \frac{n-1}{2}$.
\end{lemma}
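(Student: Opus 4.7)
\emph{Proof plan.} The plan is to perform a case analysis on the coefficient $\alpha$ and on the image of $f$ in the quotient $F\langle X\rangle/(\langle [x_1,x_2]\rangle^{TS}+T^{(3)})$.

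First suppose $\alpha\ne 0$. Apply the endomorphism $\varphi$ of $F\langle X\rangle$ defined by $\varphi(x_j)=1$ for all $j\ne i$ and $\varphi(x_i)=x_i$. Every commutator summand of $f$ contains a factor $[x_{i_r},x_{i_s}]$ with $r\ne s$, so at least one of $x_{i_r},x_{i_s}$ is sent to $1$ and the factor becomes $0$, killing the summand. The monomial summand $\alpha x_1^{a_1}\cdots x_n^{a_n}$ becomes $\alpha x_i$ because $a_i=1$ and $1^{a_j}=1$ for $j\ne i$. Hence $\alpha x_i\in L$, so $x_i\in L$, and since $\langle x_i\rangle^{TS}=F\langle X\rangle$ we conclude $L=F\langle X\rangle$.

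Now assume $\alpha=0$. If $f\in\langle [x_1,x_2]\rangle^{TS}+T^{(3)}$ (in particular if $f=0$), then $\langle f\rangle^{TS}\subseteq\langle [x_1,x_2]\rangle^{TS}+T^{(3)}$, so $L=\langle [x_1,x_2]\rangle^{TS}+T^{(3)}$. Otherwise, let $\theta$ be the largest $t\ge 1$ for which the sum of the degree-$2t$ commutator summands of $f$ is not contained in $\langle [x_1,x_2]\rangle^{TS}+T^{(3)}$; the claim is $L=\langle x_1[x_2,x_3]\cdots[x_{2\theta},x_{2\theta+1}]\rangle^{TS}+\langle [x_1,x_2]\rangle^{TS}+T^{(3)}$ with $\theta\le(n-1)/2$. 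The bound on $\theta$ comes from the observation that any summand using $2t=n$ distinct indices in its commutators can be written modulo $T^{(3)}$ as a single commutator $[M^*\cdot C^*,x_r]$ for a suitable monomial $M^*$ containing $x_i$, product of commutators $C^*$, and index $r$: the centrality of $C^*$ modulo $T^{(3)}$ together with the derivation identity in~(\ref{prop}) allow one to use $a_i=1$ (coprime to $p$) to extract the desired term, while the other terms produced by the commutator expansion vanish modulo $T^{(3)}$ owing to the repeated-index cancellation $[g_1,g_2][g_3,g_4]\equiv -[g_3,g_2][g_1,g_4]\pmod{T^{(3)}}$. Such a summand therefore lies in $\langle [x_1,x_2]\rangle^{TS}+T^{(3)}$, forcing $\theta<n/2$.

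To establish $L\subseteq\langle x_1[x_2,x_3]\cdots[x_{2\theta},x_{2\theta+1}]\rangle^{TS}+\langle [x_1,x_2]\rangle^{TS}+T^{(3)}$, one verifies that each summand of $f$ with $2t\le 2\theta$ commutator factors arises, modulo $\langle [x_1,x_2]\rangle^{TS}+T^{(3)}$, as a substitution instance of the generator $x_1[x_2,x_3]\cdots[x_{2\theta},x_{2\theta+1}]$ (with $x_1$ set to the appropriate monomial prefix and the remaining variables mapped to individual $x_{i_j}$'s); the discrepancy in commutator count for $t<\theta$ is absorbed by the choice of $\theta$. For the reverse containment, one selects an endomorphism $\psi$ of $F\langle X\rangle$ (substituting $1$ for the variables not needed to realise the canonical generator and exploiting $a_i=1$ to preserve the role of $x_i$) under which $\psi(f)$ has, modulo $\langle [x_1,x_2]\rangle^{TS}+T^{(3)}$, a nonzero scalar multiple of a substitution instance of $x_1[x_2,x_3]\cdots[x_{2\theta},x_{2\theta+1}]$. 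The main obstacle is verifying this nonvanishing: careful bookkeeping of the characteristic-$p$ arithmetic through the commutator identities~(\ref{prop}) is required to ensure that the top commutator content of $f$ survives the reductions and that $\theta$ is correctly identified.
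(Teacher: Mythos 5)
Your first case ($\alpha\neq 0$) is exactly the paper's argument and is fine. The fatal problem is your choice of $\theta$: you take the \emph{largest} $t$ whose commutator part survives modulo $\langle[x_1,x_2]\rangle^{TS}+T^{(3)}$, whereas the correct (and the paper's) choice is the \emph{minimum} $t$ occurring with nonzero coefficient. The point is that the $T$-subspaces $\langle x_1[x_2,x_3]\cdots[x_{2t},x_{2t+1}]\rangle^{TS}+T^{(3)}$ \emph{decrease} as $t$ grows: a term with $t\ge\theta$ commutators is a substitution instance of $x_1[x_2,x_3]\cdots[x_{2\theta},x_{2\theta+1}]$ (absorb the surplus commutators into the $x_1$-slot, using centrality of commutators mod $T^{(3)}$), but a term with \emph{fewer} commutators is not --- substituting $1$ into any variable of a commutator kills the whole product, so you can never lower the commutator count by substitution. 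Hence your claim that ``the discrepancy in commutator count for $t<\theta$ is absorbed by the choice of $\theta$'' is exactly backwards, and with $\theta$ maximal the inclusion $L\subseteq\langle x_1[x_2,x_3]\cdots[x_{2\theta},x_{2\theta+1}]\rangle^{TS}+\langle[x_1,x_2]\rangle^{TS}+T^{(3)}$ fails. Concretely, for $f=x_1x_4x_5[x_2,x_3]+x_1[x_2,x_3][x_4,x_5]$ (all $a_i=1$) your recipe gives $\theta=2$, yet setting $x_4=x_5=1$ shows $x_1[x_2,x_3]\in L$, and a count of commutator factors in the normal form (\ref{basa}) shows $x_1[x_2,x_3]\notin\langle x_1[x_2,x_3][x_4,x_5]\rangle^{TS}+\langle[x_1,x_2]\rangle^{TS}+T^{(3)}$. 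The correct answer here is $\theta=1$.

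You are also missing the reduction that makes the rest of the argument (including the bound $\theta\le\frac{n-1}{2}$ and the extraction of the generator) work. The paper first rewrites $f$, modulo $\langle[x_1,x_2]\rangle^{TS}+T^{(3)}$, in the form $x_1\,g(x_2,\dots,x_n)$ with all commutator indices $\ge 2$, using $m'[x_1,x_{i_2}]\cdots=[m'x_1[x_{i_3},x_{i_4}]\cdots,x_{i_2}]-x_1[m',x_{i_2}]\cdots$ (the first summand lies in $\langle[x_1,x_2]\rangle^{TS}$). After this normalization, $2\theta+1\le n$ is immediate, and with $\theta$ minimal one gets $x_1[x_2,x_3]\cdots[x_{2\theta},x_{2\theta+1}]\in L$ by substituting $1$ for $x_j$, $j>2\theta+1$ (which kills every term of length $>\theta$ because each such term contains a commutator in a killed variable) and then taking the multilinear component of $f_1(x_1,x_2+1,\dots,x_{2\theta+1}+1)$; no delicate characteristic-$p$ bookkeeping is needed beyond a single nonzero coefficient. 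Your sketch of the reverse containment via ``an endomorphism $\psi$'' isolating the \emph{top} commutator content cannot be carried out, precisely because substitutions cannot separate the top part from the lower ones; it is the \emph{bottom} part that can be isolated.
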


\begin{proof}
Note that each multihomogeneous polynomial $f(x_1,\dots,x_n) \in F \langle X \rangle$ can be written, modulo $T^{(3)}$, in the form (\ref{f_modulo_T3}). Hence, we can assume without loss of generality (permuting the free generators $x_1, \ldots , x_n$ if necessary) that $a_1=1$.

Note that if $\alpha \neq 0$, then $f(x_1,1,\ldots,1)=\alpha x_1 \in L$ so $ L = \langle x_1 \rangle^{TS} = F \langle X \rangle$. Suppose that $\alpha =0$.

We claim that we may assume without loss of generality that $f$ is of the form $f(x_1, \ldots, x_n) = x_1 g(x_2, \ldots, x_n),$ where
\begin{equation}\label{form_of_g}
g=\sum \limits_{\mathop{2 \le i_1<\ldots <i_{2t}\le n}\limits_{t \ge 1}}
\alpha_{(i_1,\ldots,i_{2t})}\frac{x_2^{a_2}\ldots x_n^{a_n}}{x_{i_1}\ldots x_{i_{2t}}}[x_{i_1},x_{i_2}]\ldots [x_{i_{2t-1}},x_{i_{2t}}].
\end{equation}
Indeed, consider a term $m =  \frac{x_1^{a_1}\ldots
x_n^{a_n}}{x_{i_1}\ldots x_{i_{2t}}}[x_{i_1},x_{i_2}]\ldots
[x_{i_{2t-1}},x_{i_{2t}}]$ in (\ref{f_modulo_T3}). If $i_1 >1$
then
\begin{equation}\label{m'}
m = x_1 \, \frac{x_2^{a_2}\ldots x_n^{a_n}}{x_{i_1}\ldots x_{i_{2t}}}[x_{i_1},x_{i_2}]\ldots [x_{i_{2t-1}},x_{i_{2t}}].
\end{equation}
Suppose that $i_1 =1$; then $m = m'[x_1,x_{i_2}]\ldots [x_{i_{2t-1}},x_{i_{2t}}]$, where $m' = \frac{x_2^{a_2}\ldots x_n^{a_n}}{x_{i_2}\ldots x_{i_{2t}}}$. We have
\begin{eqnarray*}
&&m + T^{(3)} = m'[x_1,x_{i_2}]\ldots [x_{i_{2t-1}},x_{i_{2t}}]+T^{(3)} \\
&& = [m' x_1,x_{i_2}]\ldots [x_{i_{2t-1}},x_{i_{2t}}]-x_1
[m',x_{i_2}]\ldots [x_{i_{2t-1}},x_{i_{2t}}]+T^{(3)} \\
&& = [m' x_1 [x_{i_3},x_{i_4}]\ldots [x_{i_{2t-1}},x_{i_{2t}}],x_{i_2}] -x_1
[m',x_{i_2}]\ldots [x_{i_{2t-1}},x_{i_{2t}}]+T^{(3)}.
\end{eqnarray*}
Hence,
\begin{equation}\label{m''}
m = - x_1 [m',x_{i_2}]\ldots [x_{i_{2t-1}},x_{i_{2t}}] + h,
\end{equation}
where $h \in \langle [x_1,x_2] \rangle^{TS} + T^{(3)}$.

It follows easily from  (\ref{m'}) and (\ref{m''}) that there
exists a multihomogeneous polynomial  $g_1 = g_1(x_2, \ldots, x_n)
\in F \langle X \rangle$ such that $f = x_1 g_1 + h_1$, where $h_1
\in \langle [x_1,x_2] \rangle^{TS} + T^{(3)}$. Further, there is a
multihomogeneous polynomial $g$ of the form (\ref{form_of_g}) such
that $g \equiv g_1 \pmod{T^{(3)}}$; then $f = x_1 g + h_2$, where
$h_2 \in \langle [x_1,x_2] \rangle^{TS} + T^{(3)}$. It follows
that $L = \langle x_1g(x_2, \ldots , x_n) \rangle^{TS} + \langle
[x_1,x_2] \rangle^{TS} + T^{(3)}$. Thus, we can assume without
loss of generality that  $f = x_1 g(x_2, \ldots , x_n)$, where $g$
is of the form (\ref{form_of_g}), as claimed.

If $f = 0$ then $L = \langle [x_{1},x_2] \rangle^{TS} + T^{(3)}$.
Suppose that $f \ne 0$. Let $\theta = \mbox{ min }\{ t \mid
\alpha_{(i_1,\ldots,i_{2t})} \neq 0 \}$. It is clear that $2
\theta + 1 \le n$ so $\theta \le \frac{n-1}{2}$. We can assume
that $\alpha_{(2,\ldots,2\theta+1)}\neq 0$; then
\begin{eqnarray}\label{f_final_form}
f &=& x_1 \Big( \alpha_{(2, \ldots , 2 \theta +1)}
\frac{x_2^{a_2} \ldots x_n^{a_n}}{ x_2 \ldots x_{2 \theta
+1}}[x_{2},x_{3}]\ldots
[x_{2 \theta},x_{2 \theta +1}] \nonumber\\
&+& \sum \limits_{\mathop{2 \leq i_1<\ldots <i_{2t} \leq n }
\limits_{t \ge \theta, \ i_{2t} > 2 \theta +1}}
\alpha_{(i_1,\ldots,i_{2t})}  \frac{x_2^{a_2} \ldots
x_n^{a_n}}{x_{i_1}\ldots x_{i_{2t}}}[x_{i_1},x_{i_2}]\ldots
[x_{i_{2t-1}},x_{i_{2t}}] \Big).
\end{eqnarray}
Let $f_1(x_1, \ldots , x_{2 \theta + 1}) = f(x_1,x_2, \ldots,
x_{2\theta+1},1,\ldots,1) \in L$; then
\[
f_1=  \alpha_{(2, \ldots , 2 \theta +1)} \ x_1 \frac{x_2^{a_2}
\ldots x_n^{a_n}}{ x_2 \ldots x_{2 \theta +1}}[x_{2},x_{3}]\ldots
[x_{2 \theta},x_{2 \theta +1}].
\]
It can be easily seen that the multihomogeneous component of
degree 1 in the variables $x_1,x_2,\ldots, x_{2\theta+1}$ of the
polynomial $f_1(x_1,x_2+1, \ldots, x_{2\theta+1}+1)$ is equal to
\[
\alpha_{(2, \ldots , 2 \theta +1)} \, x_1[x_2,x_3]\ldots
[x_{2\theta}, x_{2\theta+1}].
\]
It follows that $x_1[x_2,x_3]\ldots [x_{2\theta}, x_{2\theta+1}]
\in L$; hence,
\[
\langle x_1 [x_2,x_3] \ldots [x_{2 \theta},x_{2 \theta + 1}]
\rangle^{TS} +  \langle [x_{1},x_2] \rangle^{TS} + T^{(3)}
\subseteq L.
\]

On the other hand, it is clear that the polynomial $f$ of the form
(\ref{f_final_form}) belongs to the $T$-subspace of $F \langle X
\rangle$ generated by $x_1[x_2,x_3]\ldots [x_{2\theta},
x_{2\theta+1}]$; it follows that $\langle f \rangle^{TS} \subseteq
\langle x_1 [x_2,x_3] \ldots [x_{2 \theta},x_{2 \theta + 1}]
\rangle^{TS}$ and, therefore,
\[
L \subseteq \langle x_1 [x_2,x_3] \ldots [x_{2 \theta},x_{2 \theta
+ 1}] \rangle^{TS} +  \langle [x_{1},x_2] \rangle^{TS} + T^{(3)}.
\]

Thus, $L = \langle x_1 [x_2,x_3] \ldots [x_{2 \theta},x_{2 \theta
+ 1}] \rangle^{TS} +  \langle [x_{1},x_2] \rangle^{TS} + T^{(3)}$.
The proof of Lemma \ref{WC(G)} is completed.
\end{proof}

\begin{proposition}
\label{C_2k/T32k_limit} Let $W$ be a $T$-subspace of $F \langle X_{2k} \rangle$ such that $C_{2k} \subsetneqq W$. Then $W=F \langle X_{2k} \rangle$ or $W$ is generated as a $T$-subspace by the polynomials
\[
x_1^p, \ x_1^pq_1(x_2,x_3), \ldots, \ x_1^pq_{\lambda -1}(x_2,\ldots,x_{2\lambda-1}),
\]
\[
x_1[x_2,x_3,x_4], \  x_1[x_2,x_3]\ldots [x_{2\lambda},x_{2\lambda+1}],
\]
for some positive integer $\lambda \leq k-1$.
\end{proposition}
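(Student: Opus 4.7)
The strategy will be to combine Lemma \ref{WC(G)} with the description of $C_{2k}$ in Proposition \ref{generators_of_C_n}, exploiting the inclusion $\langle [x_1,x_2] \rangle^{TS} + T^{(3)}_{2k} \subseteq C_{2k} \subseteq W$ (which holds since $[x_1,x_2]$ takes central values in $G$).

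First, I will extract from $W \setminus C_{2k}$ a multihomogeneous polynomial $g$ having some variable of degree $1$. Take any multihomogeneous $f \in W \setminus C_{2k}$ and apply the endomorphism $\phi \colon x_i \mapsto 1+x_i$. All multihomogeneous components of $\phi(f)$ lie in $W$; if they all lay in $C_{2k}$, the inverse endomorphism $x_i \mapsto x_i - 1$ would give $f \in C_{2k}$, a contradiction. So some component is in $W \setminus C_{2k}$, and any such component other than the top has strictly smaller total degree; iterating (termination is clear because constants lie in $C_{2k}$) produces $g$. I then apply Lemma \ref{WC(G)} to $g$ inside $F\langle X_{2k} \rangle$ (its proof uses only substitutions $x_i \mapsto 1$ and $x_i \mapsto x_i+1$, all within $F\langle X_{2k}\rangle$). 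The $T$-subspace $L := \langle g \rangle^{TS} + \langle [x_1,x_2] \rangle^{TS} + T^{(3)}_{2k}$ is contained in $W$ and must fall into one of three cases: $L = F\langle X_{2k} \rangle$, so $W = F\langle X_{2k} \rangle$; $g \in C_{2k}$, which is impossible; or $L = \langle u_\theta \rangle^{TS} + \langle [x_1,x_2] \rangle^{TS} + T^{(3)}_{2k}$ with $u_\theta = x_1[x_2,x_3]\cdots[x_{2\theta},x_{2\theta+1}]$ and $1 \le \theta \le k-1$ (the upper bound because $2\theta+1 \le 2k$). I will then take $\lambda$ to be the smallest positive integer with $u_\lambda \in W$, so $1 \le \lambda \le k-1$.

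Let $W'$ denote the $T$-subspace generated by $x_1[x_2,x_3,x_4]$, $x_1^p$, $x_1^p q_1(x_2,x_3), \ldots, x_1^p q_{\lambda-1}(x_2,\ldots,x_{2\lambda-1})$, and $u_\lambda$. The inclusion $W' \subseteq W$ is clear from the choice of $\lambda$. To establish $C_{2k} \subseteq W'$, I will derive the remaining generators of $C_{2k}$ from Proposition \ref{generators_of_C_n}---namely $x_1^p q_j$ for $\lambda \le j \le k-1$ and $q_k^{(l)}$ for $l \ge 1$---by substituting $y_1$ in $u_\lambda(y_1,\ldots,y_{2\lambda+1})$ by a suitable monomial times a product of extra commutators (and $y_i \mapsto x_i$ for $i \ge 2$); since commutators are central modulo $T^{(3)}$, the resulting products reassemble to the required $q$-type polynomials. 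For the reverse inclusion $W \subseteq W'$, I argue by contradiction: if $f' \in W \setminus W'$, repeating the degree-reduction and Lemma \ref{WC(G)} with $W'$ replacing $C_{2k}$ produces $u_{\theta'} \in W$ for some $\theta'$, and minimality of $\lambda$ forces $\theta' \ge \lambda$. The same substitution argument then shows $u_{\theta'} \in \langle u_\lambda \rangle^{TS} + T^{(3)}_{2k} \subseteq W'$, forcing $f' \in W'$, a contradiction.

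The main obstacle I expect is the initial degree-reduction step: one must ensure that at each iteration, the only non-$C_{2k}$ component of $\phi(f)$ is never forced to be the top component $f$ itself. The argument should rest on the fact that a non-central multihomogeneous $f$ has at least one $a_i \not\equiv 0 \pmod p$, so the binomial coefficients $\binom{a_i}{j}$ in $(1+x_i)^{a_i}$ do not all vanish modulo $p$, ensuring the existence of strictly lower-degree non-$C_{2k}$ components at each step.
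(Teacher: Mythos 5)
Your overall skeleton matches the paper's: reduce to a multihomogeneous element with a variable of degree $1$, feed it to Lemma \ref{WC(G)}, take the minimal $\theta$ with $x_1[x_2,x_3]\cdots[x_{2\theta},x_{2\theta+1}]\in W$, and reassemble the generators via Proposition \ref{generators_of_C_n}. But the step you yourself flag as the ``main obstacle'' is a genuine gap, and the paper closes it by a different (standard) device. You try to manufacture a degree-$1$ variable by iterating $x_i\mapsto 1+x_i$ and extracting lower-degree components not in $C_{2k}$; your justification is only the heuristic that some $a_i\not\equiv 0\pmod p$ makes some binomial coefficients survive, and you never rule out that the only non-central component is $f$ itself (verifying this requires analysing cancellations among the basis elements (\ref{basa}) produced by distinct commutator patterns, which you do not do). The paper instead invokes the fact that over an infinite field of characteristic $p$ every $T$-subspace is generated by its multihomogeneous elements whose degree in each occurring variable is a power of $p$ (the $T$-ideal version is in \cite{Baht}), and then observes that such an element with \emph{no} variable of degree $p^0=1$ automatically lies in $C(G)$; so any $p$-power-degree generator of $W$ outside $C_{2k}$ already has a degree-$1$ variable, with no reduction argument needed.

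The second, independent problem is your reverse inclusion $W\subseteq W'$. Your passage from $f'$ to $g$ (component extraction) only yields $\langle g\rangle^{TS}\subseteq\langle f'\rangle^{TS}+C_{2k}$, not the converse, so Lemma \ref{WC(G)} applied to $g$ tells you which $u_{\theta'}$ lie in $W$ but says nothing about $f'$; the conclusion ``forcing $f'\in W'$'' is a non sequitur. The equality in Lemma \ref{WC(G)} must be applied to $f'$ itself, which requires $f'$ to have a degree-$1$ variable --- again exactly what the $p$-power-degree generation theorem guarantees for a generating set of $W$, and what your construction does not. The paper's argument is: $W$ is the sum of $C_{2k}$ and the $T$-subspaces $\langle f\rangle^{TS}$ over the $p$-power-degree generators $f\in W\setminus C_{2k}$, each of which Lemma \ref{WC(G)} identifies \emph{exactly} as $C_{2k}+\langle u_{\theta_f}\rangle^{TS}$ inside $F\langle X_{2k}\rangle$; minimality of $\lambda$ then gives $W=C_{2k}+\langle u_\lambda\rangle^{TS}$. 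If you replace your reduction step by that generation theorem, the rest of your plan (including the absorption of $x_1^pq_j$ for $j\ge\lambda$ and of the $q_k^{(l)}$ into $\langle u_\lambda\rangle^{TS}+T^{(3)}_{2k}$, and the bound $\theta\le k-1$) goes through as in the paper.
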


\begin{proof}
It is well-known that over a field $F$ of characteristic $0$ each $T$-ideal in $F \langle X \rangle$ can be generated by its multilinear polynomials. It is easy to check that the same is true for each $T$-subspace in $F \langle X \rangle$. Over an infinite field $F$ of characteristic $p>0$ each $T$-ideal in $F \langle X \rangle$ can be generated  by all its multihomogeneous polynomials $f(x_1, \ldots , x_n)$ such that, for each $i$, $ 1 \le i \le n$, $\mbox{deg}_{x_i} f = p^{s_i}$ for some integer $s_i$ (see, for instance, \cite{Baht}). Again, the same is true for each $T$-subspace in $F \langle X \rangle$.

Let $f(x_1,\ldots,x_{2k})\in W \setminus C_{2k}$ be an arbitrary multihomogeneous polynomial such that, for each $i$ ($1\leq i \leq 2k$), we have either $\mbox{deg}_{x_i} f=p^{s_i}$ or $\mbox{deg}_{x_i} f=0$. We may assume that $\mbox{deg}_{x_i} f=p^{s_i}$ for $i = 1, \ldots , l$ and $\mbox{deg}_{x_i} f=0$ for $i = l+1, \ldots , 2k$ (that is, $f = f(x_1, \ldots , x_l$)).  Then we have
 \[
 f+T^{(3)}_{2k}=\alpha \, m+\sum \limits_{1 \le i_1<\ldots <i_{2t} \le l}
\alpha_{(i_1,\ldots,i_{2t})}\frac{m}{x_{i_1}\ldots x_{i_{2t}}}[x_{i_1},x_{i_2}]\ldots [x_{i_{2t-1}},x_{i_{2t}}]+T^{(3)}_{2k},
 \]
where $\alpha, \alpha_{(i_1,\ldots,i_{2t})}\in F$,  \  $m=x_1^{p^{s_1}} \ldots x_{l}^{p^{s_{l}}}.$

If $s_i > 0$ for all $i = 1, \ldots , l$ then it can be easily seen that $f \in C(G)$ so $f \in C_{2k}$, a contradiction with the choice of $f$. Thus, $s_i =0$ for some $i$, $1 \le i \le l$. Let $L_f$ be the $T$-subspace of $F \langle X \rangle$ generated by $f$, $[x_1,x_2]$ and $T^{(3)}$. By Lemma \ref{WC(G)}, we have either $L_f = F\langle X \rangle$ or
\[
L_f = \langle x_1 [x_2,x_3]\ldots [x_{2\theta},x_{2\theta+1}] \rangle^{TS}  + \langle [x_1,x_2] \rangle^{TS} + T^{(3)}
\]
for some $\theta < k$ (since $f \notin C_{2k}$, we have $L_f \ne \langle [x_1,x_2] \rangle^{TS} + T^{(3)}$). Note that if $k=1$ (that is, $f=f(x_1,x_2)$) then the only possible case is $L_f = F\langle X \rangle$.

It is clear that if $L_f = F \langle X \rangle$ for some $f \in W \setminus C_{2k}$ then $x_1 \in W$ so $W = F \langle X_{2k} \rangle$. Suppose that $W \ne F \langle X_{2k} \rangle$; then $k>1$ and $L_f \ne F \langle X \rangle $ for all $f \in W \setminus C_{2k}$. For each $f \in W \setminus C_{2k}$ satisfying the conditions of Lemma \ref{WC(G)}, the $T$-subspace $L_f$ in $F \langle X \rangle$ can be generated, by Lemma \ref{WC(G)}, by the polynomials
\begin{equation}\label{polyn_generators}
[x_1,x_2], \quad  x_1[x_2,x_3x_4]\quad \mbox{and} \quad x_1 [x_2,x_3]\ldots [x_{2\theta},x_{2\theta+1}]
\end{equation}
for some $\theta= \theta_f < k$. Since the polynomials (\ref{polyn_generators}) belong to $F \langle X_{2k} \rangle$ (recall that $k>1$), the $T$-subspace in $F \langle X_{2k} \rangle$ generated by $f$, $[x_1,x_2]$ and $ T^{(3)}$ is also generated (as a $T$-subspace in $F \langle X_{2k} \rangle$) by the polynomials (\ref{polyn_generators}). Note that $[x_1,x_2]$ and $x_1[x_2, x_3, x_4]$ belong to $C_{2k}$ so the $T$-subspace $V_f$ in $F \langle X_{2k} \rangle$ generated by $f$ and $C_{2k}$ can be generated by $C_{2k}$ and $x_1 [x_2,x_3]\ldots [x_{2\theta},x_{2\theta+1}]$ for some $\theta= \theta_f < k$.

Let $\lambda = \mbox{ min } \{ \theta \mid x_1 [x_2,x_3]\ldots [x_{2\theta},x_{2\theta+1}] \in W \}$. Since $W$ is the sum of the $T$-subspaces $V_f$ for all suitable multihomogeneous polynomials $f \in W \setminus C_{2k}$ and each $V_f$ is generated by $C_{2k}$ and $x_1 [x_2,x_3]\ldots [x_{2\theta},x_{2\theta+1}]$ for some $\theta = \theta_f < k$, $W$ can be generated as a $T$-subspace in $F \langle X_{2k} \rangle$ by $C_{2k}$ and $x_1 [x_2,x_3]\ldots [x_{2\lambda},x_{2\lambda+1}]$. Now it follows easily from Proposition \ref{generators_of_C_n} that $W$ can be generated by the polynomials
\[
x_1^p, \ x_1^p q_1(x_2,x_3), \ldots, \ x_1^p q_{\lambda -1}(x_2,\ldots,x_{2\lambda-1})
\]
together with the polynomials
\[
x_1[x_2,x_3,x_4] \ \mbox{ and } \ x_1[x_2,x_3] \ldots [x_{2\lambda},x_{2\lambda+1}],
\]
where we note that $\lambda <k$.

This completes the proof of Proposition \ref{C_2k/T32k_limit}.
\end{proof}

Proposition \ref{C_2k_limit} follows immediately from Proposition
\ref{C_2k/T32k_limit}. The proof of Theorem \ref{theorem_main2} is
completed.

\section{Proof of Theorem \ref{theorem_main3} }

\begin{proposition} \label{rk}
For each $k \ge 1$, $R_k$  is not finitely generated as a
$T$-subspace in $F \langle X \rangle$.
\end{proposition}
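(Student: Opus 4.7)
The plan is to exhibit an ascending chain of $T$-subspaces $R_k^{(1)}\subseteq R_k^{(2)}\subseteq \cdots$ whose union is $R_k$ and whose inclusions are all strict; this immediately forces $R_k$ not to be finitely generated. The chain is the natural analogue of the chain $V_1\subseteq V_2\subseteq\cdots$ used in the proof of Proposition \ref{C2k_0}, but now carried out in $F\langle X\rangle$ instead of in $F\langle X_{2k}\rangle$ and with $T^{(3,k+1)}$ added in.

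For each $l\ge 1$, let $R_k^{(l)}$ denote the $T$-subspace of $F\langle X\rangle$ generated by the $T$-ideal $T^{(3,k+1)}$ together with the polynomials
\[
x_1[x_2,x_3,x_4],\quad x_1^p,\quad x_1^p q_1(x_2,x_3),\quad \ldots,\quad x_1^p q_{k-1}(x_2,\ldots,x_{2k-1}),
\]
and $q_k^{(i)}(x_1,\ldots,x_{2k})$ for $1\le i\le l$. Since $R_k$ is, by definition, the $T$-subspace generated by $C_{2k}$ and $T^{(3,k+1)}$, the description of a set of generators of $C_{2k}$ given by Proposition \ref{generators_of_C_n} yields
\[
R_k=\bigcup_{l\ge 1} R_k^{(l)},\qquad R_k^{(1)}\subseteq R_k^{(2)}\subseteq\cdots.
\]

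The key step is to show that each inclusion is strict. Note that $x_1[x_2,x_3,x_4]\in T^{(3)}\subseteq T^{(3,k+1)}$, that the polynomials $x_1^p,\ldots,x_1^p q_{k-1}(x_2,\ldots,x_{2k-1})$ generate $U^{(k-1)}$ as a $T$-subspace of $F\langle X\rangle$, and that $q_k^{(1)},\ldots,q_k^{(l)}$ generate $Q^{(k,l)}$ (by (\ref{sum_q}) together with the inclusion $Q^{(k,0)}\subseteq Q^{(k,1)}$ established above it). Hence
\[
R_k^{(l)}=U^{(k-1)}+Q^{(k,l)}+T^{(3,k+1)}.
\]
By Proposition \ref{G-Ts} and the inclusion $T^{(3)}\subseteq T^{(3,k+1)}$, the polynomial $q_k^{(l+1)}$ lies outside $U^{(k-1)}+Q^{(k,l)}+T^{(3,k+1)}$; this is precisely the separation result already used in the proof of Proposition \ref{C2k_0}. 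Since $q_k^{(l+1)}\in R_k^{(l+1)}$ by construction, we obtain $R_k^{(l)}\subsetneq R_k^{(l+1)}$ for every $l\ge 1$.

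The conclusion is routine. If $R_k$ were generated as a $T$-subspace by finitely many polynomials $f_1,\ldots,f_m$, then, using $R_k=\bigcup_l R_k^{(l)}$ and the fact that the $R_k^{(l)}$ form an ascending chain, there would exist $L$ with $f_1,\ldots,f_m\in R_k^{(L)}$; since $R_k^{(L)}$ is itself a $T$-subspace, this would give $R_k\subseteq R_k^{(L)}\subsetneq R_k^{(L+1)}\subseteq R_k$, a contradiction. The only nontrivial input in this plan is Proposition \ref{G-Ts}; everything else is bookkeeping, with the one point requiring care being the verification that $R_k^{(l)}$ (the $T$-subspace of $F\langle X\rangle$ generated by the listed polynomials) really coincides with the sum $U^{(k-1)}+Q^{(k,l)}+T^{(3,k+1)}$, which is immediate once one recalls that $T^{(3,k+1)}$ is already a $T$-subspace.
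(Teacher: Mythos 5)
Your proof is correct and follows essentially the same route as the paper: the paper defines $V_l = U^{(k-1)} + Q^{(k,l)} + T^{(3,k+1)}$ (identical to your $R_k^{(l)}$), obtains $R_k = \bigcup_{l \ge 1} V_l$ from the generating set of $C_{2k}$, and derives strictness of the chain from Proposition \ref{G-Ts} exactly as you do. The only slip is that you cite Proposition \ref{generators_of_C_n}, which is stated only for $n=2k$ with $k>1$; for $k=1$ you should instead invoke Proposition \ref{generators_of_C_n/T3n} (stated for $k \ge 1$, and used by the paper), together with the fact that $T^{(3)}_{2k} \subset T^{(3,k+1)}$, to cover all cases of the claim.
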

\begin{proof}
Recall that $R_k$ is the  $T$-subspace in $F \langle X \rangle$
generated by $C_{2k}$ and $T^{(3,k+1)}$. By Proposition
\ref{generators_of_C_n/T3n}, $C_{2k}$ is generated as a
$T$-subspace in $F \langle X_{2k} \rangle$ by $T_{2k}^{(3)}$
together with the polynomials (\ref{gen_C2k}) and the polynomials
$\{ q_k^{(l)} (x_1, \ldots ,x_{2k}) \mid l = 1, 2, \ldots \} .$
Since $T_{2k}^{(3)} \subset T^{(3)} \subset T^{(3,k+1)}$, we have
\[
R_k=U^{(k-1)}+ \sum_{l \ge 1} Q^{(k,l)}+T^{(3, k+1)},
\]
where $U^{(k-1)}$ and $Q^{(k,l)}$  are the $T$-subspaces in $F
\langle X \rangle$ generated by the polynomials  (\ref{gen_C2k})
and by the polynomial $q_k^{(l)}(x_1, \ldots , x_{2k})$,
respectively.

Let $V_l = U^{(k-1)}+ \sum_{i \le l} Q^{(k,i)}+T^{(3, k+1)}$. Then
\begin{equation}\label{union_rk}
R_k = \bigcup_{l \ge 1} V_l
\end{equation}
and $V_1 \subseteq V_2  \subseteq \ldots .$ Recall that, by
(\ref{sum_q}), $\sum_{i \le l} Q^{(k,i)} = Q^{(k,l)}$ so $V_l =
U^{(k-1)}+ Q^{(k,l)}+T^{(3, k+1)}$. By Proposition \ref{G-Ts},
$Q^{(k,l+1)} \not \subseteq V_l$ for all $l \ge 1$ so
\begin{equation}\label{strictly-ascending_rk}
V_1 \subsetneqq V_2 \subsetneqq \ldots .
\end{equation}
The result follows immediately from (\ref{union_rk}) and
(\ref{strictly-ascending_rk}).
\end{proof}

\begin{lemma}\label{fTSRk}
Let $f = f(x_1, \ldots,x_{n}) \in F \langle X \rangle$ be a
multihomogeneous polynomial of the form
\begin{equation}\label{f_modulo_T3_2}
f=\alpha \, x_1^{p^{s_1}}  \ldots x_n^{p^{s_n}} + \sum_{i_1<
\ldots <i_{2t}} \alpha_{(i_1,\ldots,i_{2t})} \,
\frac{x_1^{p^{s_1}} \ldots x_n^{p^{s_n}}}{x_{i_1}\ldots
x_{i_{2t}}}[x_{i_1},x_{i_2}]\ldots [x_{i_{2t-1}},x_{i_{2t}}],
\end{equation}
where $\alpha,  \alpha_{(i_1,\ldots,i_{2t})}\in F$, $s_i \geq 0$
for all $i$. Let $L = \langle f \rangle^{TS} + R_k$, $k \ge 1$.
Then one of the following holds:
\begin{enumerate}
\item $L = F \langle X \rangle$;
\item $L = R_k$;
\item $L = \langle x_1[x_2,x_3]  \ldots [x_{2 \theta},x_{2 \theta
+1}] \rangle^{TS} + R_k$ for some $\theta$, $1 \le \theta \le k$;
\item $L=\langle  x_1^{p^s}q_k^{(s)}(x_2,\ldots,x_{2k+1}) \rangle
^{TS}+R_k$ for some $s \ge 1$.
\end{enumerate}
\end{lemma}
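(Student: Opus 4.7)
The lemma is an analogue of Lemma \ref{WC(G)} with $R_k$ in place of $\langle [x_1,x_2]\rangle^{TS}+T^{(3)}$, the new feature being that variables now carry $p$-power degrees. I would start with standard reductions on (\ref{f_modulo_T3_2}). Since $T^{(3,k+1)}\subseteq R_k$, every term with $2t\geq 2(k+1)$ commutator factors may be discarded, leaving only terms with $t\leq k$. For the ``diagonal'' monomial $\alpha x_1^{p^{s_1}}\cdots x_n^{p^{s_n}}$: if $\alpha\neq 0$ and some $s_i=0$, specializing $x_j\mapsto 1$ for every $j\neq i$ annihilates every commutator term (each commutator becomes $[1,1]=0$ or $[x_i,1]=0$) and leaves $\alpha x_i$, forcing $L=F\langle X\rangle$ (case 1). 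Otherwise, either $\alpha=0$ or every $s_i\geq 1$; in the latter case, the identities (\ref{prop2}) rewrite the diagonal term modulo $T^{(3)}$ as $(x_1^{p^{s_1-1}}\cdots x_n^{p^{s_n-1}})^p\in\langle x_1^p\rangle^{TS}\subseteq R_k$. So we may assume modulo $R_k$ only commutator terms with $1\leq t\leq k$ remain.

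Next I would classify the surviving commutator terms by the number $2t$ of commutator factors. If all of them already lie in $R_k$, we are in case 2. Otherwise, for each surviving term
\[
\frac{x_1^{p^{s_1}}\cdots x_n^{p^{s_n}}}{x_{i_1}\cdots x_{i_{2t}}}[x_{i_1},x_{i_2}]\cdots[x_{i_{2t-1}},x_{i_{2t}}]
\]
I would proceed as in the proof of Lemma \ref{WC(G)}. When $2t<2k$, first specialize the variables not appearing in the commutator to $1$, then perform the shift $x_i\mapsto 1+x_i$ on the remaining variables and extract the multilinear component; this produces $x_1[x_2,x_3]\cdots[x_{2\theta},x_{2\theta+1}]$ with $\theta=t$, giving case 3. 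When $2t=2k$ and every outside exponent $s_j$ is $\geq 1$, Lemma \ref{lemma_GTs} identifies the $T$-subspace generated by the term modulo $T^{(3)}$ as $Q^{(k,\ell)}+T^{(3)}$ for the appropriate $\ell$, and recombining the outside $p^{s_j}$-factors as a single $p$-th power via (\ref{prop2}) produces a canonical generator of the form $x_1^{p^s}q_k^{(s)}(x_2,\ldots,x_{2k+1})$ with $s=\ell\geq 1$, giving case 4. When $2t=2k$ but some outside $s_j=0$, the same specialize-and-shift device yields $x_1[x_2,x_3]\cdots[x_{2k},x_{2k+1}]$, i.e.\ case 3 with $\theta=k$.

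The main obstacle will be showing that the list is exhaustive when $f$ simultaneously contains terms of several types: the total $L$ must still equal one of the four listed $T$-subspaces. This requires verifying a hierarchy among the candidate generators. Case 1 trivially absorbs the others; among cases 3 and 4, the smallest $\theta$ (respectively smallest $s$) dominates, in the sense that its canonical $T$-subspace contains, modulo $R_k$, the ones coming from larger parameters. The key tools are (i) the chain $Q^{(k,0)}\subseteq Q^{(k,1)}\subseteq\cdots$ from (\ref{sum_q}); (ii) the substitutions $x_i\mapsto x_i^{p^s}$ together with (\ref{prop}) to relate generators of different commutator depths; and (iii) Lemma \ref{lemma_GTs} to identify the various $Q^{(k,\ell)}$'s arising from monomials with prescribed $p$-adic valuations on the exponents. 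Once these containments are in place, the reverse inclusion $L\supseteq\langle\text{canonical generator}\rangle^{TS}+R_k$ is immediate from the extraction above, and the forward inclusion follows because every term of $f$ has been placed into that $T$-subspace, completing the proof.
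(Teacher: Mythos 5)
Your overall skeleton matches the paper's: split according to whether the minimal exponent $p^{s}$ is $1$ or a higher power of $p$, feed the first case into Lemma \ref{WC(G)}, and produce the generators $x_1[x_2,x_3]\cdots[x_{2\theta},x_{2\theta+1}]$ or $x_1^{p^s}q_k^{(s)}(x_2,\ldots,x_{2k+1})$ in the respective cases. But two steps of your plan do not work as described. First, for a term with $2t<2k$ commutator factors your recipe (``specialize the variables not appearing in the commutators to $1$, shift, take the multilinear component'') yields $[x_{i_1},x_{i_2}]\cdots[x_{i_{2t-1}},x_{i_{2t}}]$ with no leading variable; this is a central polynomial of $G$ in at most $2k$ variables, hence already lies in $C_{2k}\subseteq R_k$ and gives nothing. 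The case-3 generator $x_1[x_2,x_3]\cdots[x_{2\theta},x_{2\theta+1}]$ only arises when some variable has degree exactly $1$ (some $s_j=0$) and is kept un-specialized in front; that is precisely the hypothesis and mechanism of Lemma \ref{WC(G)}, which the paper invokes wholesale for the subcase $\min_i s_i=0$ rather than term by term, thereby also avoiding the cross-term interference that a term-by-term extraction must otherwise control.

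Second, and more seriously, your route to case 4 is broken. Lemma \ref{lemma_GTs} applies only to monomials in exactly $2k$ variables, each occurring in a commutator; a term of $f$ with $n>2k$ has $n-2k$ ``outside'' variables, and the $T$-subspace generated by $w^p\cdot(\mbox{commutator part})$ is not determined by $\langle w^p\rangle^{TS}$ and $Q^{(k,\ell)}$ separately. Note that $q_k^{(s)}\in C_{2k}\subseteq R_k$ for $s\ge 1$, while $x_1^{p^s}q_k^{(s)}(x_2,\ldots,x_{2k+1})$ is not in general in $R_k$: the outside factor is exactly what carries the new information, so applying Lemma \ref{lemma_GTs} to the commutator variables alone loses the content of the term. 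Moreover the correct parameter is $s=\min_i s_i$ over \emph{all} variables, not the minimum over the commutator variables that Lemma \ref{lemma_GTs} would hand you. The paper's proof spends most of its length on precisely this point: rewriting $f$ modulo $R_k$ in the form $x_1^{p^s}g(x_2,\ldots,x_n)$ with a variable of globally minimal degree pulled out of every commutator, which is achieved by substituting a product $u$ of the small-degree variables into the central polynomial $h=x_1^{a_1-1}[x_1,x_2]x_2^{a_{i_2}-1}\cdots\in C_{2k}\subseteq R_k$ and expanding via (\ref{prop}) and (\ref{prop2}). Your sketch omits this reduction entirely, and without it neither the extraction of $x_1^{p^s}q_k^{(s)}$ nor the reverse inclusion $f\in\langle x_1^{p^s}q_k^{(s)}(x_2,\ldots,x_{2k+1})\rangle^{TS}+R_k$ goes through.
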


\begin{proof}
Note that each multihomogeneous  polynomial $f(x_1,\dots,x_n) \in
F \langle X \rangle$ of degree $p^{s_i}$ in $x_i$ ($1 \le i \le
n$) can be written, modulo $T^{(3)}$, in the form
(\ref{f_modulo_T3_2}). Hence, we can assume without loss of
generality (permuting the free generators $x_1, \ldots , x_n$ if
necessary) that $s_1 \le s_i$ for all $i$. Write $s = s_1$.

Suppose that $s=0$. Then, by Lemma \ref{WC(G)}, we have either
\[
\langle f \rangle^{TS} +  \langle [x_1,x_2] \rangle^{TS} + T^{(3)}
= F \langle X \rangle
\]
or
\[
\langle f \rangle^{TS} +  \langle [x_1,x_2] \rangle^{TS} + T^{(3)}
= \langle [x_1,x_2] \rangle^{TS} + T^{(3)}
\]
or
\[
\langle f \rangle^{TS} +  \langle [x_1,x_2] \rangle^{TS} + T^{(3)}
= \langle x_1[x_2,x_3] \ldots [x_{2 \theta},x_{2 \theta +1}]
\rangle^{TS} + \langle [x_1,x_2] \rangle^{TS} + T^{(3)}
\]
for some $\theta$. Since  $\langle [x_1,x_2] \rangle^{TS} +
T^{(3)} \subset R_k$ and $x_1[x_2,x_3] \ldots [x_{2 \theta},x_{2
\theta +1}] \in R_k$ if $\theta > k$, we have either $L = F
\langle X \rangle$ or $L = R_k$ or
\[
L = \langle x_1[x_2,x_3]  \ldots [x_{2 \theta},x_{2 \theta +1}]
\rangle^{TS} + R_k
\]
for some $\theta \le k$.

Now suppose that $s>0$;  then $s_i >0$ for all $i$, $1 \leq i \leq
n$. It can be easily seen that, by (\ref{prop2}), $  x_1^{p^{s_1}} \ldots
x_n^{p^{s_n}} \in \left(\langle x_1^p \rangle^{TS} + T^{(3)}
\right) \subset R_k$ and, for all $t <k$,
\[
\frac{x_1^{p^{s_1}}  \ldots x_n^{p^{s_n}}}{x_{i_1}\ldots
x_{i_{2t}}}[x_{i_1},x_{i_2}]\ldots [x_{i_{2t-1}},x_{i_{2t}}] \in
\left(\langle x_1^pq_t(x_2, \ldots , x_{2t+1})\rangle^{TS} +
T^{(3)}\right) \subset R_k.
\]
Also we have  $\frac{x_1^{p^{s_1}} \ldots
x_n^{p^{s_n}}}{x_{i_1}\ldots x_{i_{2t}}}[x_{i_1},x_{i_2}]\ldots
[x_{i_{2t-1}},x_{i_{2t}}] \in T^{(3,k+1)} \subset R_k$ for each $t
> k$.  It follows that we can assume without loss of generality
that the polynomial $f$ is of the form
\begin{equation}\label{form_f}
f = \sum \limits_{1 \le i_1<\ldots <i_{2k} \le n}
\alpha_{(i_1,\ldots,i_{2k})} \,\frac{x_1^{p^{s_1}} \ldots
x_n^{p^{s_n}}}{x_{i_1}\ldots x_{i_{2k}}}[x_{i_1},x_{i_2}]\ldots
[x_{i_{2k-1}},x_{i_{2k}}].
\end{equation}

Note that if $n < 2k$ then $f=0$ and if $n=2k$ then
\[
f = \alpha_{(1,2, \ldots , 2k)}  \frac{x_1^{p^{s_1}} \ldots
x_{2k}^{p^{s_{2k}}}}{x_{1}x_2\ldots x_{2k}}[x_{1},x_{2}]\ldots
[x_{2k-1},x_{2k}]
\]
so, by Lemma \ref{lemma_GTs},  we have $f \in Q^{(k,s)} +
T^{(3)}$, where $s=s_1>0$. In both cases we have $f \in R_k$ and
$L=R_k$.

Suppose that $n > 2k$. We claim  that we may assume that $f$ is of
the form
\begin{equation}\label{form_f2}
f(x_1, \ldots , x_n) = x_1^{p^s} g(x_2, \ldots , x_n),
\end{equation}
where
\[
g= \sum \limits_{2 \le i_1<\ldots <i_{2k} \le n}
\alpha_{(i_1,\ldots,i_{2k})} \, \frac{x_2^{p^{s_2}} \ldots
x_n^{p^{s_n}}}{x_{i_1}\ldots x_{i_{2k}}}[x_{i_1},x_{i_2}]\ldots
[x_{i_{2k-1}},x_{i_{2k}}].
\]

Indeed, consider  a term $m = \frac{x_1^{p^{s_1}} \ldots
x_n^{p^{s_n}}}{x_{i_1}\ldots x_{i_{2k}}}[x_{i_1},x_{i_2}]\ldots
[x_{i_{2k-1}},x_{i_{2k}}]$ in (\ref{form_f}). If $i_1 >1$ then
\begin{equation}\label{m_x1_1}
m = x_1^{p^{s}} \frac{x_2^{p^{s_2}}  \ldots
x_n^{p^{s_n}}}{x_{i_1}\ldots x_{i_{2k}}}[x_{i_1},x_{i_2}]\ldots
[x_{i_{2k-1}},x_{i_{2k}}].
\end{equation}

Suppose that $i_1 = 1$. Let $a_i = p^{s_i}$ for all $i$. Then
\[
m +T^{(3,k+1)} = x_1^{p^{s}-1}  \frac{x_2^{p^{s_2}} \ldots
x_n^{p^{s_n}}}{x_{i_2}\ldots x_{i_{2k}}}[x_{1},x_{i_2}]\ldots
[x_{i_{2k-1}},x_{i_{2k}}] +T^{(3,k+1)}
\]

\[
= x_{j_1}^{a_{j_1}}  \cdots x_{j_{l}}^{a_{j_{l}}} x_{1}^{a_{1}-1}
\cdots x_{i_{2k}}^{a_{i_{2k}}-1} [x_1,x_{i_{2}}] \cdots
[x_{i_{2k-1}},x_{i_{2k}}] + T^{(3,k+1)}
\]

\[
= x_1^{a_1-1}x_{j_1}^{a_{j_1}} \ldots
x_{j_l}^{a_{j_l}}[x_1,x_{i_2}]x_{i_2}^{a_{i_2}-1} m'+T^{(3,k+1)},
\]
where
\[
m'=x_{i_{3}}^{a_{i_{3}}-1} [x_{i_{3}},x_{i_{4}}]
x_{i_{4}}^{a_{i_{4}}-1} \ldots x_{i_{2k-1}}^{a_{i_{2k-1}}-1}
[x_{i_{2k-1}},x_{i_{2k}}] x_{i_{2k}}^{a_{i_{2k}}-1},
\]
$\{j_1,\ldots,j_l\}=\{1,\ldots,n\}  \setminus
\{1,i_2,\ldots,i_{2k}\}$, $l=n-2k>0$. Suppose that
\[
a_1=a_{j_1}=a_{j_2}=\ldots=a_{j_z} \  \ \mbox{and} \ \
a_{j_{z+1}},a_{j_{z+2}},\ldots,a_{j_l}>a_1.
\]
Let
\[
u=x_1x_{j_1} \cdots x_{j_z}x_{j_{z+1}}^{a'_{j_{z+1}}}\cdots
x_{j_{l}}^{a'_{j_{l}}},
\]
where $a'_i = a_i/p^s$ for all $i$. Let
\[
h= h(x_1,  \ldots ,x_{2k}) = x_1^{a_{1}-1}[x_1,x_2]x_2^{a_{i_2}-1} \ldots
x_{2k-1}^{a_{i_{2k-1}}-1}[x_{2k-1},x_{2k}] x_{2k}^{a_{i_{2k}-1}}.
\]
By (\ref{prop}), $h \in C(G)$; hence,  $h \in C_{2k} \subset R_k$.
It follows that $h(u,x_{i_2}, \ldots , x_{i_{2k}}) \in R_k$, that
is,
\begin{equation}\label{uxi2}
u^{p^s-1}[u,x_{i_2}]x_{i_2}^{a_{i_2}-1} m'\in R_k.
\end{equation}
Since, by (\ref{prop2}), $[v_1^p,v_2]  \in T^{(3)} \subset
T^{(3,k+1)}$ for all $v_1,v_2 \in F \langle X \rangle$, we have
\begin{eqnarray*}
&&u^{p^s-1}[u,x_{i_2}]x_{i_2}^{a_{i_2}-1} m' +T^{(3,k+1)}\\
&&= \left(x_1x_{j_1} \cdots
x_{j_z}\right)^{p^s-1}x_{j_{z+1}}^{a_{j_{z+1}}}\cdots
x_{j_l}^{a_{j_l}}[x_1x_{j_1} \ldots
x_{j_z},x_{i_2}]x_{i_2}^{a_{i_2}-1} m' +T^{(3,k+1)} \\
&& = \left(x_1 x_{j_1} \cdots
x_{j_z}\right)^{p^s-1} x_{j_{z+1}}^{a_{j_{z+1}}}\cdots
x_{j_l}^{a_{j_l}}[x_1,x_{i_2}]x_{j_1} \ldots x_{j_z}
x_{i_2}^{a_{i_2}-1} m' \\
&& + \left(x_1x_{j_1} \cdots
x_{j_z}\right)^{p^s-1}x_{j_{z+1}}^{a_{j_{z+1}}}\cdots
x_{j_l}^{a_{j_l}}x_1[x_{j_1} \ldots
x_{j_z},x_{i_2}]x_{i_2}^{a_{i_2}-1} m' + T^{(3,k+1)} \\
&&= m + x_1^{p^s} x_{j_1}^{p^s -1} \cdots x_{j_z}^{p^s-1} x_{j_{z+1}}^{a_{j_{z+1}}}\cdots x_{j_l}^{a_{j_l}} [x_{j_1} \ldots
x_{j_z},x_{i_2}]x_{i_2}^{a_{i_2}-1} m' +T^{(3,k+1)}
\end{eqnarray*}
where the second summand is not present if $z = 0$ (that is, if $a_{j_i} > a_1$ for all $i$), in which case $m \in R_k$. Since
\begin{eqnarray*}
&&x_1^{p^s} x_{j_1}^{p^s -1} \cdots x_{j_z}^{p^s-1} x_{j_{z+1}}^{a_{j_{z+1}}}\cdots x_{j_l}^{a_{j_l}} [x_{j_1} \ldots
x_{j_z},x_{i_2}]x_{i_2}^{a_{i_2}-1} m' +T^{(3,k+1)} \\
&& = x_1^{p^s} \sum \limits_{2 \le i_1<\ldots <i_{2k}}
\beta_{(i_1,\ldots,i_{2k})} \,   \frac{x_2^{p^{s_2}}
\ldots x_n^{p^{s_n}}}{x_{i_1}\ldots
x_{i_{2k}}}[x_{i_1},x_{i_2}]\ldots [x_{i_{2k-1}},x_{i_{2k}}]
+T^{(3,k+1)}
\end{eqnarray*}
for some $\beta_{(i_1,\ldots,i_{2k})} \in F$, we have
\begin{equation}\label{mi1i2k_2}
m + x_1^{p^s} \sum \limits_{2 \le i_1<\ldots <i_{2k}}
\beta_{(i_1,\ldots,i_{2k})}  \,  \frac{x_2^{p^{s_2}}
\ldots x_n^{p^{s_n}}}{x_{i_1}\ldots
x_{i_{2k}}}[x_{i_1},x_{i_2}]\ldots [x_{i_{2k-1}},x_{i_{2k}}] \in
R_k.
\end{equation}

It is clear that, using (\ref{m_x1_1}) and (\ref{mi1i2k_2}), we
can write $f = f_1 + f_2$, where
\[
f_1 = x_1^{p^s} \left( \sum \limits_{2 \le i_1<\ldots <i_{2k}}
\gamma_{(i_1,\ldots,i_{2k})}  \, \frac{x_2^{p^{s_2}} \ldots
x_n^{p^{s_n}}}{x_{i_1}\ldots x_{i_{2k}}}[x_{i_1},x_{i_2}]\ldots
[x_{i_{2k-1}},x_{i_{2k}}] \right)
\]
is of the form (\ref{form_f2}) and $f_2 \in R_k$.  Then we have
$\langle f \rangle^{TS} + R_k = \langle f_1 \rangle^{TS} + R_k.$
Thus, we can assume (replacing $f$ with $f_1$) that the polynomial
$f$ is of the form (\ref{form_f2}), as claimed.

If $f = 0$ then $L = R_k$. Suppose that $f \ne 0$.  Then we can
assume without loss of generality that
$\alpha_{(2,3,\ldots,2k+1)}\neq 0$. It follows that the
$T$-subspace $\langle f \rangle^{TS}$ contains the polynomial
\begin{eqnarray*}
&&h(x_1,\ldots ,x_{2k+1})=\alpha_{(2,3,\ldots,2k+1)}^{-1}f(x_1,\ldots ,x_{2k+1},1,1,\ldots,1)\\
&&= x_1^{p^s}x_2^{p^{s_2}-1}\ldots x_{2k+1}^{p^{s_{2k+1}}-1}[x_2,x_3]\ldots [x_{2k},x_{2k+1}].
\end{eqnarray*}
Then $\langle f \rangle^{TS} + R_k$  also contains the homogeneous
component of the polynomial $h(x_1+1,\ldots,x_{2k+1}+1)$ of degree
$p^s$ in each variable $x_i$ $(i = 1,2, \ldots , 2k+1)$, that is
equal, modulo $T^{(3)}$,  to
\[
\gamma \ x_1^{p^s}x_2^{p^s-1}\ldots x_{2k+1}^{p^s-1}[x_2,x_3]
\ldots [x_{2k}, x_{2k+1}],
\]
where $\gamma = \prod_{i=2}^{2k+1} {p^{s_i}-1 \choose p^s-1}
\equiv 1 \pmod{p}$. It follows that
\[
x_1^{p^s}q_k^{(s)}(x_2,\ldots,x_{2k+1}) \in \langle f \rangle^{TS} + R_k.
\]
On the other hand,  for all $i_1, \ldots ,i_{2k}$ such that $2 \le
i_1 < \ldots < i_{2k} \le n$, we have
\[
x_1^{p^s}  \frac{x_2^{p^{s_2}} \ldots x_n^{p^{s_n}}}{x_{i_1}\ldots
x_{i_{2k}}}[x_{i_1},x_{i_2}]\ldots [x_{i_{2k-1}},x_{i_{2k}}] \in
\langle x_1^{p^s}q_k^{(s)}(x_2,\ldots,x_{2k+1}) \rangle^{TS} +
T^{(3,k+1)}
\]
(recall that $s_i \ge s$ for all $i$) so
\[
f \in \langle  x_1^{p^s}q_k^{(s)}(x_2,\ldots,x_{2k+1})
\rangle^{TS} + R_k.
\]
Thus,
\[
\langle f \rangle^{TS}+R_k=\langle
x_1^{p^s}q_k^{(s)}(x_2,\ldots,x_{2k+1}) \rangle ^{TS}+R_k,
\]
where $s \ge 1$. The proof of Lemma \ref{fTSRk} is completed.
\end{proof}

\begin{proposition} \label{Rk_gen}
Let $W$ be a $T$-subspace of $F \langle X \rangle$ such that $R_k
\subsetneqq W.$ Then one of the following holds:
\begin{enumerate}
\item $W=F\langle X \rangle$.
\item $W$ is generated as a $T$-subspace by the polynomials
\[
x_1^p, \ x_1^p q_1(x_2,x_3),  \ldots, \ x_1^p q_{\lambda-1}(x_2,
\ldots ,x_{2 \lambda -1}),
 \]
 \[
 x_1[x_2,x_3,x_4],\ x_1[x_2,x_3]\ldots [x_{2\lambda},x_{2\lambda+1}]
 \]
for some $\lambda \leq k$.
\item $W$ is generated as a $T$-subspace by the polynomials
\[
x_1^p, \ x_1^pq_1(x_2,x_3),  \ldots, \ x_1^pq_{k-1}(x_2, \ldots
,x_{2 k -1}),
\]
\[
\{q_k^{(l)}(x_1, \ldots ,x_{2k})  \mid 1\leq l \leq \mu-1 \}, \
x_1^{p^{\mu}}q_k^{(\mu)}(x_2, \ldots ,x_{2k+1}),
\]
\[
x_1[x_2,x_3,x_4], \ x_1[x_2,x_3]\ldots [x_{2k+2},x_{2k+3}]
\]
for some $\mu \ge 1$.
\end{enumerate}
\end{proposition}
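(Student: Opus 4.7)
The plan is to mirror the proof of Proposition \ref{C_2k/T32k_limit}, but with the stronger Lemma \ref{fTSRk} replacing Lemma \ref{WC(G)}. Since $F$ is infinite of characteristic $p>0$, $W$ is generated by its multihomogeneous elements $f$ with each $\deg_{x_i}f \in \{0, p^{s_i}\}$, so it suffices to analyse such $f \in W \setminus R_k$. For each such $f$, Lemma \ref{fTSRk} places $\langle f\rangle^{TS}+R_k$ into one of four types; type~2 (equal to $R_k$) is excluded, and if any $f$ realises type~1 then $W = F\langle X\rangle$, giving conclusion~1. Otherwise each $f$ yields either a type-3 generator $x_1[x_2,x_3]\cdots[x_{2\theta},x_{2\theta+1}]$ with $1 \le \theta \le k$ or a type-4 generator $x_1^{p^s}q_k^{(s)}(x_2,\ldots,x_{2k+1})$ with $s \ge 1$. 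Collecting the occurring values in $\Theta$ and $S$, and noting that $W \supsetneq R_k$ forces $\Theta \cup S \ne \emptyset$, one has
\[
W = R_k + \sum_{\theta\in\Theta} \langle x_1[x_2,x_3]\cdots[x_{2\theta},x_{2\theta+1}]\rangle^{TS} + \sum_{s\in S} \langle x_1^{p^s}q_k^{(s)}(x_2,\ldots,x_{2k+1})\rangle^{TS}.
\]

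The core of the argument consists of three substitution computations, all performed modulo $T^{(3)} \subseteq R_k$ using (\ref{prop}) and (\ref{prop2}). (i) Substituting $x_1 \mapsto y_1[y_{2\lambda+2},y_{2\lambda+3}]\cdots[y_{2\theta},y_{2\theta+1}]$ shows that for $\lambda \le \theta$ the longer commutator product lies in $\langle x_1[x_2,x_3]\cdots[x_{2\lambda},x_{2\lambda+1}]\rangle^{TS}+T^{(3)}$, so within the type-3 family $\min \Theta$ is dominant. (ii) For $\lambda \le k$, substituting $x_1 \mapsto y_1^{p^s}y_2^{p^s-1}\cdots y_{2k+1}^{p^s-1}[y_{2\lambda+2},y_{2\lambda+3}]\cdots[y_{2k},y_{2k+1}]$ converts $x_1[x_2,x_3]\cdots[x_{2\lambda},x_{2\lambda+1}]$ into a scalar multiple of $y_1^{p^s}q_k^{(s)}(y_2,\ldots,y_{2k+1})$, so any type-3 generator dominates every type-4 generator; variants of this substitution with different auxiliary parameters similarly produce $x_1^p q_i$ for every $i\ge\lambda$ and $q_k^{(l)}$ for every $l \ge 1$, both of which will be needed when rewriting $W$. (iii) Substituting $x_1 \mapsto z^{p^{l-\mu}}x_2^{p^{l-\mu}-1}\cdots x_{2k+1}^{p^{l-\mu}-1}$ in $x_1^{p^\mu}q_k^{(\mu)}(x_2,\ldots,x_{2k+1})$, the identity $(g_1g_2)^{p^\mu}\equiv g_1^{p^\mu}g_2^{p^\mu}\pmod{T^{(3)}}$ causes the outer $p^\mu$-th power to distribute, and the resulting exponent on each $x_j$ becomes $(p^{l-\mu}-1)p^\mu + p^\mu - 1 = p^l - 1$, yielding $z^{p^l}q_k^{(l)}(x_2,\ldots,x_{2k+1})$ for every $l \ge \mu$; for $l \le \mu$ the same polynomial is recovered as a non-vanishing multihomogeneous component of $(1+x_1)^{p^\mu}q_k^{(\mu)}(1+x_2,\ldots,1+x_{2k+1})$. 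Setting $z\mapsto 1$ in the first conclusion then recovers every $q_k^{(l)}$ from any single type-4 generator.

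The case analysis now divides cleanly. If $\Theta \ne \emptyset$, set $\lambda = \min \Theta$: by (i) and (ii), $W = R_k + \langle x_1[x_2,x_3]\cdots[x_{2\lambda},x_{2\lambda+1}]\rangle^{TS}$. Replacing $R_k$ by the generators supplied by Proposition \ref{generators_of_C_n/T3n} and deleting those generators that are absorbed by the new one via the variants of (ii), namely all $q_k^{(l)}$ and the $x_1^p q_i$ with $i \ge \lambda$, yields the generator list of conclusion~2. If $\Theta = \emptyset$ and $S \ne \emptyset$, pick any $\mu \in S$: by (iii), $W = R_k + \langle x_1^{p^\mu}q_k^{(\mu)}(x_2,\ldots,x_{2k+1})\rangle^{TS}$. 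Again using Proposition \ref{generators_of_C_n/T3n} and deleting the $q_k^{(l)}$ with $l \ge \mu$, absorbed via the $l \ge \mu$ case of (iii), produces the generator list of conclusion~3.

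The main technical obstacle is computation (iii): one must verify simultaneously, for every $j \in \{2,\ldots,2k+1\}$, the degree identity $(p^{l-\mu}-1)p^\mu + p^\mu - 1 = p^l - 1$ and that no spurious commutators appear when $(x_2^{p^{l-\mu}-1}\cdots x_{2k+1}^{p^{l-\mu}-1})^{p^\mu}$ is expanded via (\ref{prop2}) and multiplied with the commutator factors of $q_k^{(\mu)}$. This substitution is what collapses the potentially infinite family of type-4 generators indexed by $s$ into a single $T$-subspace orbit modulo $R_k$, and is precisely the reason conclusion~3 is parameterised by one integer $\mu$ rather than by an arbitrary subset of $\mathbb{N}$.
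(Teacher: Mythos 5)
Your proposal is correct and follows essentially the same route as the paper's proof: reduce to multihomogeneous $f$ with $p$-power degrees, apply Lemma \ref{fTSRk} to each $\langle f\rangle^{TS}+R_k$, and use the minimal $\theta$ (resp.\ minimal $s$) together with the absorption relations among the generators $x_1[x_2,x_3]\cdots[x_{2\theta},x_{2\theta+1}]$ and $x_1^{p^s}q_k^{(s)}$ to collapse $W$ to $R_k$ plus a single extra generator. The only difference is that you spell out the substitutions behind the containments that the paper asserts as ``straightforward to check,'' and your computations (i)--(iii) are accurate.
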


\begin{proof}
Let $f = f(x_1, \ldots , x_n)$ be an arbitrary polynomial in $W
\setminus R_k$ satisfying the conditions of Lemma \ref{fTSRk},
that is, an arbitrary multihomogeneous polynomial such that
$\mbox{deg}_{x_i}f = p^{s_i}$ for some $s_i \ge 0$ $(1 \le i \le
n)$. Let $L_f = \langle f \rangle^{TS} + R_k$. By Lemma
\ref{fTSRk}, we have either $L_f = F \langle X \rangle$ or
\[
L_f = \langle x_1[x_2,x_3] \ldots [x_{2 \theta},x_{2 \theta +1}]
\rangle^{TS} + R_k
\]
for some $\theta \le k$ or
\[
L_f=\langle x_1^{p^s}q_k^{(s)}(x_2,\ldots,x_{2k+1}) \rangle ^{TS}+R_k
\]
for some $s \ge 1$.

Note that $W$ is generated as a $T$-subspace in $F \langle X
\rangle$ by $R_k$ together with the polynomials $f \in W
\setminus R_k$ satisfying the conditions of Lemma
\ref{fTSRk}. It follows that $W = \sum L_f $, where the sum
is taken over all the polynomials $f \in W \setminus R_k$
satisfying these conditions.

It is  clear that if $L_f = F \langle X \rangle$ for some $f \in W
\setminus R_k$ then $W = F \langle X \rangle$. Suppose that $L_f
\ne F \langle X \rangle$ for all $f \in W \setminus R_k$. Let, for
some $f \in W \setminus R_k$, we have $L_f = \langle x_1[x_2,x_3]
\ldots [x_{2 \theta},x_{2 \theta +1}] \rangle^{TS} + R_k$, $\theta
\le k$. Define $\lambda = \mbox{ min }\{ \theta \mid x_1[x_2,x_3]
\ldots [x_{2 \theta},x_{2 \theta +1}] \in W \}$; note that
$\lambda \le k$. We have
\[
x_1[x_2,x_3]  \ldots [x_{2 \theta},x_{2 \theta +1}] \in \langle
x_1[x_2,x_3] \ldots [x_{2 \lambda},x_{2 \lambda +1}] \rangle^{TS}
\]
for all $\theta  \ge \lambda$ and
\[
x_1^{p^s}q_k^{(s)}(x_2,\ldots,x_{2k+1})  \in \langle x_1[x_2,x_3]
\ldots [x_{2 \lambda},x_{2 \lambda +1}] \rangle^{TS} + T^{(3)}
\]
for all $s$. Hence, $W =  \langle x_1[x_2,x_3] \ldots [x_{2
\lambda},x_{2 \lambda +1}] \rangle^{TS} + R_k$, where $\lambda \le
k$. It follows that $W$ is generated as a $T$-subspace by the
polynomials
\[
x_1^p, \ x_1^p q_1(x_2,x_3),  \ldots, \ x_1^p q_{\lambda-1}(x_2,
\ldots ,x_{2 \lambda -1}),
\]
\[
x_1[x_2,x_3,x_4], \ x_1[x_2,x_3] \ldots [x_{2 \lambda}, x_{2
\lambda+1}],
\]
$\lambda \leq k$.

Now suppose that,  for all $f \in W \setminus R_k$ satisfying the
conditions of Lemma \ref{fTSRk}, we have
\[
L_f=\langle x_1^{p^s}q_k^{(s)}(x_2,\ldots,x_{2k+1}) \rangle ^{TS}+R_k
\]
for some $s = s_f  \ge 1$. Note that if $s \leq r$ then
\[
x_1^{p^r}q_k^{(r)}(x_2,  \ldots, x_{2k+1}) \in \langle
x_1^{p^s}q_k^{(s)}(x_2,\ldots,x_{2k+1}) \rangle ^{TS} + T^{(3)}.
\]
Take $\mu =  \mbox{ min } \{ s \mid
x_1^{p^s}q_k^{(s)}(x_2,\ldots,x_{2k+1}) \in W \}$. Then we have
$W=R_k+\langle x_1^{p^\mu} q_k^{(\mu)} (x_2,\ldots,x_{2k+1})
\rangle^{TS}$ and it is straightforward to check that $W$ can be
generated as a $T$-subspace in $F \langle X \rangle$ by the
polynomials
\[
x_1^p, \ x_1^pq_1(x_2,x_3),  \ldots, \
x_1^pq_{k-1}(x_2,\ldots,x_{2k-1})
\]
and the polynomials $\{ q_k^{(l)}(x_1,\ldots,x_{2k})  \mid 1\leq l
\leq \mu-1 \}$, $ x_1^{p^{\mu}} q_k^{(\mu)}(x_2, \ldots,
x_{2k+1})$ together with the polynomials
\[
x_1[x_2,x_3,x_4] \ \ \mbox{and}  \ \ x_1[x_2,x_3]\ldots
[x_{2k+2},x_{2k+3}].
\]

This completes the proof of Proposition \ref{Rk_gen}.
\end{proof}

Proposition \ref{Rk_gen} immediately implies the following corollary.

\begin{corollary} \label{Rk_limit}
Let $W$ be a $T$-subspace of $F \langle  X \rangle$ such that $R_k
\subsetneqq W$  ($k \ge 1$). Then $W$ is a finitely generated
$T$-subspace in $F \langle X \rangle$.
\end{corollary}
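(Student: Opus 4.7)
The plan is to derive the corollary directly from Proposition \ref{Rk_gen}. That proposition classifies every $T$-subspace $W$ satisfying $R_k \subsetneqq W$ into three mutually exhaustive cases and, in each case, exhibits an explicit finite list of polynomials that generate $W$ as a $T$-subspace of $F\langle X \rangle$. Since ``finitely generated'' is precisely the conclusion to be established, no further work should be required beyond inspecting the three lists.

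More concretely, in case (1) one has $W = F\langle X \rangle$, which is generated as a $T$-subspace by the single element $x_1$. In case (2) the displayed generating set consists of the polynomials $x_1^p q_i$ for $0 \le i \le \lambda - 1$, together with $x_1[x_2,x_3,x_4]$ and $x_1[x_2,x_3]\cdots[x_{2\lambda},x_{2\lambda+1}]$, for some specific $\lambda \le k$; this is a finite list of $\lambda + 2$ polynomials. In case (3) the displayed generating set consists of the polynomials $x_1^p q_i$ for $0 \le i \le k - 1$, the polynomials $q_k^{(l)}$ for $1 \le l \le \mu - 1$, the single polynomial $x_1^{p^\mu} q_k^{(\mu)}(x_2,\ldots,x_{2k+1})$, and the two polynomials $x_1[x_2,x_3,x_4]$ and $x_1[x_2,x_3]\cdots[x_{2k+2},x_{2k+3}]$, for some specific $\mu \ge 1$; this is again a finite list, of cardinality $k + \mu + 2$. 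Thus in every case $W$ is finitely generated.

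There is no real obstacle to overcome at this stage: all of the substantive work is already packaged inside Proposition \ref{Rk_gen}, which in turn rests on Lemma \ref{fTSRk}. Those earlier results carry out the delicate analysis that splits a ``new'' multihomogeneous polynomial $f \in W \setminus R_k$ according to whether it forces a commutator chain longer than $2k$ into $W$ or forces a new element of the form $x_1^{p^s} q_k^{(s)}(x_2,\ldots,x_{2k+1})$ into $W$, and then shows that $W$ is automatically the $T$-subspace generated by $R_k$ together with a single such extremal polynomial. The present corollary is just the finite-generation half of the limit property, read off from this normal form, and so the proof need only say ``apply Proposition \ref{Rk_gen} and observe that each of the three generating sets is finite.''
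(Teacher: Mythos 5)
Your proposal is correct and matches the paper exactly: the paper states that Corollary \ref{Rk_limit} follows immediately from Proposition \ref{Rk_gen}, since each of the three cases exhibits an explicit finite generating set. Nothing further is needed.
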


\begin{proposition} \label{Not_equal}
If $k \ne l$ then $R_k \ne R_l.$
\end{proposition}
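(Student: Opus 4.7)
Without loss of generality assume $k<l$. The plan is to exhibit a polynomial lying in $R_k\setminus R_l$; a natural candidate, suggested by the form of Proposition~\ref{Rk_gen}, is
\[
f \;=\; x_1\,[x_2,x_3][x_4,x_5]\cdots[x_{2k+2},x_{2k+3}],
\]
namely $x_1$ times a product of exactly $k+1$ disjoint commutators. The containment $f\in R_k$ is immediate: after renaming variables $x_i\mapsto x_{i-1}$, the product $[x_2,x_3]\cdots[x_{2k+2},x_{2k+3}]$ is a generator of $T^{(3,k+1)}$, so $f$ lies in the $T$-ideal $T^{(3,k+1)}\subseteq R_k$.

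The main content is to show $f\notin R_l$, and I would argue this modulo $T^{(3)}$ using the basis (\ref{basa}). Since $C(G)$ is a $T$-subspace of $F\langle X\rangle$ containing $C_{2l}$, we have $\langle C_{2l}\rangle^{TS}\subseteq C(G)$, and hence
\[
R_l \;=\; \langle C_{2l}\rangle^{TS}+T^{(3,l+1)} \;\subseteq\; C(G)+T^{(3,l+1)}.
\]
Using $T^{(3)}\subseteq T^{(3,l+1)}\subseteq R_l$, passing to the quotient gives
\[
R_l/T^{(3)} \;\subseteq\; Z \;+\; T^{(3,l+1)}/T^{(3)},
\]
where $Z$ denotes the centre of $F\langle X\rangle/T^{(3)}$.

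Next I would identify both summands in the basis (\ref{basa}). Using the fact $h\in C(G)\iff[h,x]\in T^{(3)}$, we get $Z=(C(G)+T^{(3)})/T^{(3)}$, and then from the congruence $[x^m,y]\equiv m\,x^{m-1}[x,y]\pmod{T^{(3)}}$ together with the centrality of commutators in the quotient one sees that $Z$ is exactly the linear span of those basis elements (\ref{basa}) all of whose exponents $m_i$ are divisible by $p$. On the other hand, as noted in the paper just before (\ref{T3iTn3}), $T^{(3,l+1)}/T^{(3)}$ is contained in the span of basis elements (\ref{basa}) with $s\ge l+1$. Now $f+T^{(3)}$ is itself a single basis element of type (\ref{basa}), with $d=1$, $i_1=1$, $m_1=1$ and $s=k+1$. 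Since $m_1=1$ is not divisible by $p$, it does not lie in $Z$, and since $k+1<l+1$ it does not lie in the span of basis elements with $s\ge l+1$. By uniqueness of the expansion in the basis (\ref{basa}), $f+T^{(3)}$ cannot be written as a sum of one element from each, so $f\notin R_l$, which proves $R_k\ne R_l$.

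The one delicate point I anticipate is the characterization of the centre $Z$ of $F\langle X\rangle/T^{(3)}$ as the span of the basis elements (\ref{basa}) with all $m_i\equiv 0\pmod p$; this must be stated carefully, but it follows directly from the two congruences above. Once that is in place, the whole argument reduces to the numerical inequality $k+1<l+1$ and the uniqueness of the basis expansion.
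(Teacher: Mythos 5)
Your strategy is genuinely different from the paper's, and your candidate is well chosen: $f=x_1[x_2,x_3]\cdots[x_{2k+2},x_{2k+3}]$ does lie in $R_k\setminus R_l$ when $k<l$, and the reduction $R_l\subseteq C(G)+T^{(3,l+1)}$ is correct. But the step meant to prove $f\notin R_l$ contains a genuine error. The centre $Z$ of $F\langle X\rangle/T^{(3)}$ is \emph{not} the span of the basis elements (\ref{basa}) with all exponents divisible by $p$: the generator $x_1^pq_1(x_2,x_3)\equiv x_1^px_2^{p-1}x_3^{p-1}[x_2,x_3]\pmod{T^{(3)}}$ from Theorem \ref{generators_of_C} is central and is a single basis element with exponents $p-1$, a direct counterexample to your claimed equality. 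Worse, $Z$ is not spanned by basis elements at all, so no pointwise criterion on basis elements can support your ``uniqueness of the basis expansion'' step. Concretely, using $[g_1,g_2][g_3,g_4]\equiv-[g_3,g_2][g_1,g_4]\pmod{T^{(3)}}$ and the centrality of commutators one checks that
\[
u=x_1[x_2,x_3][x_4,x_5]\cdots[x_{2k+2},x_{2k+3}]+x_2[x_1,x_3][x_4,x_5]\cdots[x_{2k+2},x_{2k+3}]
\]
satisfies $[u,y]\equiv[x_1,y][x_2,x_3]\cdots+[x_2,y][x_1,x_3]\cdots\equiv0\pmod{T^{(3)}}$, so $u+T^{(3)}\in Z$, yet the basis expansion of $u+T^{(3)}$ contains your $f+T^{(3)}$ with coefficient $1$. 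Hence the coordinate functional of the basis element $f+T^{(3)}$ does not vanish on $Z$, and the final step of your argument collapses.

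The conclusion $f\notin R_l$ is nevertheless true and can be rescued by a cleaner reduction that avoids describing $Z$: since $f\in F\langle X_{2k+3}\rangle$ and $2k+3<2(l+1)$, applying the endomorphism sending $x_i\mapsto x_i$ for $i\le 2k+3$ and $x_i\mapsto0$ otherwise to a putative decomposition $f=c+t$ with $c\in C(G)$, $t\in T^{(3,l+1)}$ and invoking (\ref{T3iTn3}) gives $t\in T^{(3)}_{2k+3}\subseteq C(G)$, hence $f\in C(G)$; but $[f,y]\equiv[x_1,y][x_2,x_3]\cdots[x_{2k+2},x_{2k+3}]\not\equiv0\pmod{T^{(3)}}$, so $f\notin C(G)$ (as noted in Remark 1 of the introduction). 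You should also compare with the paper's own proof, which is entirely different and much softer: assuming $R_k=R_l$ with $k<l$, all generators of $C(G)$ from Theorem \ref{generators_of_C} are shown to lie in $R_l$, so $C(G)\subsetneqq R_l$; then Theorem \ref{C_limit} forces $R_l$ to be finitely generated, contradicting Proposition \ref{rk}. That route needs no basis computations at all.
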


\begin{proof}
Suppose, in order to get a contradiction, that $R_k = R_l$ for
some $k,l$, $k < l$. Then we have $C(G) \subseteq R_l.$

Indeed, by Theorem \ref{generators_of_C}, the $T$-subspace $C(G)$
is generated by the polynomial $x_1[x_2,x_3,x_4]$ and the
polynomials $x_1^p, x_1^p q_1(x_2,x_3),  \ldots , x_1^p q_n(x_2,
\ldots , x_{2n+1}), \ldots .$ Clearly,
\[
x_1[x_2,x_3,x_4] \in T^{(3)} \subset R_l.
\]
Further,
\[
x_1^p, \  x_1^p q_1(x_2,x_3), \ \ldots , \  x_1^p q_{l-1}(x_2,
\ldots , x_{2l-1}) \in R_l
 \]
by the definition of $R_l$ and
\[
x_1^p q_{k+1}(x_2, \ldots , x_{2k+3}), \ x_1^p q_{k+2}(x_2, \ldots
, x_{2k+5}), \ldots \in T^{(3, k+1)} \subseteq R_k = R_l
 \]
by the definition of $T^{(3,k+1)}$. Since $k < l$, we have
\[
x_1^p, \  x_1^p q_1(x_2,x_3), \  \ldots , \ x_1^p q_{k}(x_2,
\ldots , x_{2k+1}), \ x_1^p q_{k+1}(x_2, \ldots , x_{2k+3}), \
\ldots  \in R_l.
\]
Hence, all the generators of the $T$-subspace $C(G)$ belong to
$R_l$ so $C(G) \subseteq R_l$, as claimed.

Note that $T^{(3,k+1)} \subseteq R_l$ and $T^{(3,k+1)} \not
\subseteq C(G)$ so $ C(G) \subsetneqq R_l$. By Theorem
\ref{C_limit}, $C(G)$ is a limit $T$-subspace so each $T$-subspace
$W$ such that $ C(G) \subsetneqq W$ is finitely generated. In
particular, $R_l$ is a finitely generated $T$-subspace. On the
other hand, by Proposition \ref{rk}, the $T$-subspace $R_l$ is not
finitely generated. This contradiction proves that $R_k \ne R_l$
if $k \ne l$, as required.
\end{proof}

Theorem \ref{theorem_main3}  follows  immediately from Proposition \ref{rk},
Corollary \ref{Rk_limit} and Proposition \ref{Not_equal}.

\section{Acknowledgement}
This work was partially supported by DPP/UnB and by CNPq-FAPDF PRONEX grant 2009/00091-0 (193.000.580/2009). The work of the second and the third authors was partially supported by CNPq; the work of the third author was also partially supported by FEMAT. Thanks are due to the referee whose remarks and suggestions improved the paper.

\end{document}